\newtheorem{theorem}{Theorem}[section]
\newtheorem*{theorem*}{Theorem}
\newtheorem{lemma}{Lemma}[section]
\newtheorem{corollary}[theorem]{Corollary}
\newtheorem{proposition}{Proposition}[section]
\def\Ric{\text{Ric}}
\def\Ric{\operatorname{Ric}}
\def\Ric{{\operatorname{Ric}}}
\numberwithin{equation}{section}
\begin{document}
	\title[Multiplicity Estimates ]{A multiplicity estimate for the Jacobi operator of a nonflat Yang-Mills field over $\mathbb{S}^m$}

\author{Lei Ni}
\address{Lei Ni. Department of Mathematics, University of California, San Diego, La Jolla, CA 92093, USA}
\email{leni@ucsd.edu}


\subjclass[2010]{}

\begin{abstract}  Here we provide refinements of the stability results of Simons and Xin, concerning the stability of Yang-Mills fields and harmonic maps respectively. The result also implies the earlier Morse index estimates for both cases.
\end{abstract}

\maketitle

\section{Introduction}

The stability is a central issue in variational problems in analysis and geometry. It was proven that there is no nontrivial stable Yang-Mills fields on $\mathbb{S}^m$ for $m\ge 5$ \cite{Simons, BLS, BL} and there is no nonconstant stable harmonic maps from $\mathbb{S}^m$ for $m\ge 3$ \cite{Xin}. The stability of minimal surfaces was studied also extensively \cite{Simons-Ann}.

Later the above results were strengthened by effective estimates on the lower bound of the Morse index. Before we state the results let's first recall some notations and  definitions.

Let $u: (M^m, g)\to (N^n, h)$ be a smooth map between two Riemannian manifolds. Define
$$
\mathcal{E}(u)=\frac{1}{2}\int_M |du|^2\, d\mu_g, \mbox{ where } |du|^2=\sum_{i=1}^m |du(e_i)|^2.
$$
Here  $\{e_i\}$ is  an orthonormal basis of  $T_xM$ for any $x\in M$. The Einstein convention is applied below. If $u_s=u(s,\cdot): (-\epsilon, \epsilon)\times M\to N$ is a family of maps (a variation), we can consider the first and second variations of $\mathcal{E}(u_s)$. The critical point is called a {\it harmonic map}. The second variation formula (\cite{ES}, \cite{Xin} ) gives that, at a harmonic map $u=u_0$,
\begin{equation}\label{eq:11}
\left.\frac{d^2}{ds^2}
\mathcal{E}(u_s)\right|_{s=0}=\int_M\langle \nabla^*\nabla V- R^N_{V, du(e_i)}du(e_i), V\rangle\, d\mu_g
\end{equation}
Here $V$ is the variational vector field along $u: M \to N$, namely $\left. V=du_s(\frac{\partial}{\partial s}) \right|_{s=0}$, which can be viewed as a section of bundle $ E=u^{-1} (TN)$ over $M$, and  $\nabla^*\nabla V=-\nabla^2_{e_i, e_i} V $ is  $-1$ times the trace of the Hessian operator of the bundle $E$. The convention of the curvature is that $\langle R_{X, Y}Y, X\rangle>0$ for the standard sphere $\mathbb{S}^n$. The second order self-adjoint elliptic  operator $\mathcal{J}_u=\nabla^*\nabla -R^N_{(\cdot),  du(e_i)}du(e_i)$ is called the Jacobi operator. The harmonic map is called stable if all the eigenvalues of $\mathcal{J}_u$ is nonnegative. The constant map clearly is the minimizer of $\mathcal{E}(u)$. The stable maps are the local minimizers. The total number of negative eigenvalues (multiplicity counted) of $\mathcal{J}_u$ is called the Morse index of $u$ (denoted as $\iota(u)$). The dimension of null space $\mathcal{N}=\{V\,|\, \mathcal{J}_u (V)=0\}$ is called the nullity of $u$. The following result of \cite{ElSou} improves Xin's theorem.

\begin{theorem}\label{thm:ElSou} For $m\ge 3$, and any Riemannian manifold $(N, h)$, let $u: \mathbb{S}^m\to (N, h)$ be a nonconstant harmonic map. Then the Morse index of $u$, $\iota(u)\ge m+1$.
\end{theorem}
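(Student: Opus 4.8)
The plan is to prove a Morse index lower bound of $m+1$ for a nonconstant harmonic map $u\colon \mathbb{S}^m\to N$ by constructing an $(m+1)$-dimensional space of variational vector fields on which the second variation is strictly negative. Let me think about the standard approach.

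The classical technique (Xin, and the index refinements) is to test the second variation form against vector fields built from the conformal Killing fields of $\mathbb{S}^m$. The conformal group of $\mathbb{S}^m$ contains, besides the isometries $\mathrm{SO}(m+1)$, the conformal (but non-Killing) vector fields. Specifically, for each constant vector $a \in \mathbb{R}^{m+1}$, restricting the ambient linear/position fields gives a conformal vector field $X_a$ on $\mathbb{S}^m$, and these span an $(m+1)$-dimensional space. These are gradients of the first eigenfunctions (the coordinate functions $x^a$ restricted to the sphere), and $\nabla x^a$ has divergence proportional to $x^a$.

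The test vector fields are then $V_a = du(X_a)$, i.e. pushing forward the conformal field $X_a$ along $u$ to get a section of $E = u^{-1}(TN)$. The key computation: plug $V_a$ into the second variation formula \eqref{eq:11} and show that summing over an orthonormal basis $\{a\}$ of $\mathbb{R}^{m+1}$ gives something strictly negative, forced by the harmonic map equation and by the specific geometry of $\mathbb{S}^m$ (the dimension constraint $m\ge 3$ should enter precisely here, ensuring the conformal scaling term dominates). Each individual $V_a$ may not be negative, but the trace over $a$ should produce $\int_M (\text{const}\cdot(m-2) - \cdots) |du|^2$ type terms whose sign is controlled. One then argues that the map $a\mapsto V_a$ is injective on the relevant space (since $u$ is nonconstant, $du$ doesn't annihilate all these conformal fields), yielding an $(m+1)$-dimensional subspace; since the quadratic form is negative on it (or at least on an $(m+1)$-dimensional piece after diagonalizing), the index is at least $m+1$.

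Carrying this out, first I would recall that the coordinate functions satisfy $\Delta x^a = -m\, x^a$ and that $\nabla x^a = X_a$ are conformal fields with $\frac{1}{2}\mathcal{L}_{X_a} g = -x^a\, g$. Then I would compute $\nabla^*\nabla V_a$ using a Weitzenböck-type identity relating $du(X_a)$ to the second fundamental data of $u$ and the curvature $R^N$, and compare it term by term with the curvature term $R^N_{V_a, du(e_i)}du(e_i)$. The goal is to show $\sum_a \langle \mathcal{J}_u V_a, V_a\rangle < 0$ via an integration by parts that converts the curvature term into a boundary-free expression involving $|du|^2$ and $\sum_a x^a\, x^a = 1$.

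The main obstacle I anticipate is the Weitzenböck bookkeeping: controlling $\nabla^*\nabla(du(X_a))$ requires commuting the rough Laplacian past $du$ and the conformal field, which generates exactly the curvature term $R^N$ that must cancel against the Jacobi curvature term, plus the sphere's Ricci curvature acting on $X_a$. Getting the constant right so that the $(m-2)$ or $(m-1)$ factor comes out with the correct sign for $m\ge 3$, and simultaneously verifying nondegeneracy of the test space (so that the $(m+1)$-dimensionality is not lost to the kernel), is the delicate part; I expect the nonconstancy of $u$ and the harmonic map equation $\tr\nabla du = 0$ to be used precisely to close this gap.
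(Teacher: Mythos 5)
Your choice of test fields is the right one and coincides with the paper's: for each linear function $\ell=x^a$ on $\mathbb{R}^{m+1}$ one takes $V_a=du(\nabla \ell)$, a section of $u^{-1}(TN)$. But the proposal has a genuine gap at the step where you pass from the computation to the index bound. You propose to show that the \emph{sum} $\sum_a \langle \mathcal{J}_u V_a, V_a\rangle$ is strictly negative and then ``diagonalize.'' A negative trace of the quadratic form restricted to an $(m+1)$-dimensional subspace only guarantees one negative eigenvalue of the restriction, hence $\iota(u)\ge 1$ --- that is Xin's instability theorem, not the bound $\iota(u)\ge m+1$. What actually happens (Proposition 3.1 of the paper) is much stronger and pointwise: $\mathcal{J}_u\bigl(du(\nabla\ell)\bigr)=-(m-2)\,du(\nabla\ell)$ exactly, for every $\ell$, using $\nabla_X\nabla\ell=-\ell X$, $\Delta\nabla\ell=-\nabla\ell$, $\operatorname{Ric}^{\mathbb{S}^m}=(m-1)\operatorname{id}$, and the harmonic map equation to kill the cross term $2(\nabla_{e_j}du)(\nabla_{e_j}\nabla\ell)=-2\ell\,(\nabla_{e_j}du)(e_j)=0$. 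So each $V_a$ is an eigenvector with eigenvalue $-(m-2)<0$, the quadratic form is negative definite on the whole span, and no averaging over $a$ is needed (nor would it suffice).

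The second gap is your injectivity claim. You assert $a\mapsto V_a$ is injective ``since $u$ is nonconstant, $du$ doesn't annihilate all these conformal fields.'' Nonconstancy immediately gives only that not \emph{all} $V_a$ vanish, which produces a nonzero subspace, not an $(m+1)$-dimensional one. What is needed is the converse for a \emph{single} direction: if $du(\nabla\ell)\equiv 0$ for one nonzero $\ell$, then $u$ is constant. This is a separate lemma (Proposition 3.2 of the paper), proved by flowing along $\nabla\ell$ from the source to the sink of $\ell$ and noting that $u$ is constant along the flow lines, which sweep out $\mathbb{S}^m$ minus a point. Without it, the $(m+1)$-dimensionality of $\{V_a\}$ --- and hence the bound $m+1$ rather than $1$ --- is not established.
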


Xin's theorem was preceded/motivated by a corresponding important result of Simons for Yang-Mills fields and Yang-Mills connections \cite{Simons, BLS}, which is defined to be the critical points of the Yang-Mills functional $\mathcal{YM}(D)$ for connections $D$. Recall that for a connection $D$ on a principle $G$-bundle and associated $G$-vector bundle $E$ over a Riemannian manifold $(M, g)$:
$$
\mathcal{YM}(D)=\frac{1}{2}\int_M \|R^D\|^2\, d\mu_g.
$$
Here $R^D$ is the curvature of the connection $D$ on $E$. The norm is taken with respect to the Riemannian metric of $M$ and  the $Ad(G)$-invariant metric on the Lie algebra $\mathfrak{g}$ of $G$ (usually a subalgebra of $\mathfrak{so}(n)$ where $n=\dim(E)$). The first and second variations of $\mathcal{YM}(D_s)$ can be defined and calculated similarly for a family of connections $D_s$. In particular, a critical point is called a Yang-Mills connection with its curvature $R^D$ being called a Yang-Mills field. The second variational formula (cf. Theorem 2.21 of \cite{BL}) at a Yang-Mills connection is given by
\begin{equation}\label{eq:12}
\left.\frac{d^2}{ds^2}
\mathcal{YM}(D_s)\right|_{s=0}=\int_M\langle (d_D)^*d_D B+\mathcal{R}^D(B), B\rangle\, d\mu_g
\end{equation}
where $B=\left.\frac{d}{ds} D_s\right|_{s=0} \in \Omega^1(\mathfrak{g}_E)$ is the variational $1$-form of the connections. The operator $d_D$ is the exterior derivative on $\Omega^*(\mathfrak{g})$ and $(d_D)^*)$ is its conjugate. One can consult \cite{BL} for details of the notations. The minimizers include the self-dual (anti-self dual) ones in dimensional four and the flat connections.  The associated second order self adjoint operator $\mathcal{J}_D= (d_D)^*d_D +\mathcal{R}^D $ is the corresponding Jacobi operator. One defines the Morse index and nullity of $D$ similarly. The following result of \cite{NU} extends Simons' theorem.

\begin{theorem}\label{thm:NU} For any nonflat Yang-Mills connection $D$ on any vector bundle $E$ over the $m$-sphere $\mathbb{S}^m$, $m\ge 5$ with the standard metric, the Morse index $\iota(D)\ge m+1$.
\end{theorem}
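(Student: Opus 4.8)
My plan is to produce an explicit $(m+1)$-dimensional subspace of sections on which the index form of $\mathcal{J}_D$ is negative definite; since the Morse index equals the largest dimension of a subspace on which the index form is negative definite, this yields $\iota(D)\ge m+1$. Realize $\mathbb{S}^m$ as the unit sphere in $\mathbb{R}^{m+1}$ and, for $a\in\mathbb{R}^{m+1}$, let $x_a=\langle x,a\rangle$ be the restricted linear coordinate and $W_a=\nabla x_a$ its gradient. These satisfy the Obata--Hessian identity $\nabla_Y W_a=-x_a\,Y$ for all tangent $Y$ (equivalently $\nabla^2 x_a=-x_a\,g$), together with $\Delta x_a=-m\,x_a$, and the algebraic relations $\sum_\alpha x_\alpha^2=1$, $\sum_\alpha W_\alpha\otimes W_\alpha=g$, $\sum_\alpha x_\alpha W_\alpha=0$, $\sum_\alpha|W_\alpha|^2=m$ (sums over an orthonormal basis of $\mathbb{R}^{m+1}$). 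As test $1$-forms I would take
\[
B_a:=\iota_{W_a}R^D\in\Omega^1(\mathfrak{g}_E),\qquad B_a(Y)=R^D(W_a,Y),
\]
which depend linearly on $a$ and are, up to an infinitesimal gauge transformation $d_D(\cdot)$ lying in $\Ker\mathcal{J}_D$, the variation of $D$ generated by the conformal flow of $W_a$.

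The heart of the argument is the eigensection identity $\mathcal{J}_D B_a=(4-m)\,B_a$ for every $a$. To prove it I would compute $\nabla^{*}\nabla B_a$ by commuting covariant derivatives, using: (i) the Yang--Mills equation $(d_D)^{*}R^D=0$ to kill the term $\sum_k(\nabla_{e_k}R^D)(e_k,\cdot)$; (ii) the identity $\nabla_Y W_a=-x_a Y$, which converts the first--order term $\sum_k(e_k x_a)R^D(e_k,\cdot)$ into exactly $B_a$ and makes the undifferentiated $x_a^2$--contributions cancel; (iii) the Weitzenböck formula for the $d_D$--harmonic $\mathfrak{g}_E$--valued $2$--form $R^D$ (which is $d_D$--closed by Bianchi and $d_D$--coclosed by Yang--Mills), specialized to $\Ric=(m-1)g$ and sectional curvature $1$ on $\mathbb{S}^m$, to replace $\nabla^{*}\nabla R^D$ by the numerical curvature term plus the quadratic bundle term. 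The decisive point is that the bundle--curvature term produced here is precisely $-\mathcal{R}^D(B_a)$, so that in $\mathcal{J}_D B_a=(d_D)^{*}d_D B_a+\mathcal{R}^D(B_a)$ all $\mathfrak{g}_E$--quadratic contributions cancel, leaving only the scalar $4-m$. This is the exact analogue of the harmonic--map computation $\mathcal{J}_u\big(du(\nabla x_a)\big)=(2-m)\,du(\nabla x_a)$ underlying Theorem~\ref{thm:ElSou}; the shift from $2$ to $4$ reflects the conformal weight of $\mathcal{YM}$ (conformally invariant in dimension $4$) against that of the energy (invariant in dimension $2$), and at the critical dimension the eigenvalue is $0$, consistent with the $B_a$ then being genuine Jacobi fields. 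As a consistency check, contracting the identity over an orthonormal basis and using $\sum_\alpha|\iota_{W_\alpha}R^D|^2=2\|R^D\|^2$ recovers the Bourguignon--Lawson--Simons relation $\sum_\alpha\langle\mathcal{J}_D B_\alpha,B_\alpha\rangle_{L^2}=2(4-m)\int_M\|R^D\|^2\,d\mu_g$, negative for $m\ge5$.

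Granting the identity, for $m\ge5$ the common eigenvalue $4-m$ is negative, so $\operatorname{span}\{B_a:a\in\mathbb{R}^{m+1}\}$ lies in a single negative eigenspace of $\mathcal{J}_D$ and the index is at least its dimension. It then remains to show that $a\mapsto B_a$ is injective, which is where nonflatness enters: if $\iota_{W_c}R^D\equiv0$ for some $c\neq0$, then differentiating this relation and using $\nabla_Y W_c=-x_c Y$ together with the Yang--Mills and Bianchi equations forces $R^D$ to vanish to infinite order at a zero of $W_c$, whence $R^D\equiv0$ by unique continuation, contradicting nonflatness. Thus $\dim\operatorname{span}\{B_a\}=m+1$ and $\iota(D)\ge m+1$.

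I expect the main obstacle to be the curvature bookkeeping in the eigenvalue identity of the second paragraph: one must track the $\mathfrak{g}_E$--valued commutators carefully and verify that the bundle curvature generated by commuting two covariant derivatives of $R^D$ is exactly the operator $\mathcal{R}^D$ of the index form, so that it cancels rather than merely combines. The Yang--Mills and second Bianchi identities must be invoked in the right order for this cancellation to occur, and it is this cancellation—together with the conformal identity for $W_a$—that produces a clean scalar eigenvalue rather than a section--valued error term. I note that even if the precise constant were only known to be a positive multiple of $4-m$, the conclusion $\iota(D)\ge m+1$ for $m\ge5$ would be unaffected.
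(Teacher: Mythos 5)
Your proposal follows the paper's strategy almost exactly in its main component: you take the same test $1$-forms $B_a=\iota_{\nabla x_a}R^D$, you identify the same eigenvalue identity $\mathcal{J}_D B_a=(4-m)B_a$, and your description of the ``decisive cancellation'' is precisely the paper's identity $2\mathcal{R}^D(B_\ell)-\mathcal{R}^D(R^D)_{\nabla\ell,(\cdot)}=0$, obtained there from the Bochner formula (2.6) for $\mathfrak{so}(E)$-valued $2$-forms together with the Yang--Mills equation, the Hessian identity $\nabla^2 x_a=-x_a g$, and the Bourguignon--Lawson lemma $(d_D)^*B_a=0$; your sketch, once the bookkeeping is done, reproduces this computation. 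Where you genuinely diverge is the injectivity of $a\mapsto B_a$. The paper (Proposition \ref{prop:34}) argues via the flow $\Psi_s$ of $\nabla\ell$: since $\iota_{\nabla\ell}R^D$ is the first variation of the pulled-back connections $\widetilde\Psi_{-s}D$, its vanishing makes $D$ the pullback of the fiber over the source point, hence flat --- an argument that is short, needs no PDE input, and does not even use the Yang--Mills equation. You instead propose to show $R^D$ vanishes to infinite order at a critical point of $x_c$ and invoke Aronszajn-type unique continuation for the elliptic system satisfied by $R^D$ (which does require Yang--Mills). This route is workable --- e.g.\ at first order, differentiating $x_cR^D(Z,Y)=(D_ZR^D)(\nabla x_c,Y)$ at the pole and combining the resulting symmetry with the skew-symmetry of $R^D$ and the Bianchi identity does force $DR^D=0$ there --- but the induction to all orders is a genuine piece of multilinear algebra that you have only asserted, and it is the one step of your plan where real work remains; the paper's flow argument sidesteps it entirely. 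Everything else (negativity of $4-m$ for $m\ge5$, the $(m+1)$-dimensional span, and the passage from a negative eigenspace to the Morse index) is correct and matches the paper.
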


The goal of this note is to prove  the following refinement of Theorems \ref{thm:ElSou} and \ref{thm:NU}.

\begin{theorem}\label{thm:main} (i) For $m\ge 3$ and any Riemannian manifold $(N, h)$, and any nonconstant harmonic map $u: \mathbb{S}^m\to (N, h)$, let $\mathcal{J}_u$ be the Jacobi operator. Let $E^u_{\lambda}:=\{X\, |\mathcal{J}_u(X)=\lambda X\}$ be the space of the eigenvector fields with eigenvalue $\lambda$. Then
$\dim(E^u_{-(m-2)})\ge m+1$. In particular, the smallest eigenvalue has the estimate $\lambda_1(\mathcal{J}_u)\le -(m-2)$.

(ii) For any nonflat Yang-Mills connection $D$ on any vector bundle $E$ over the $m$-sphere $\mathbb{S}^m$, $m\ge 5$ with the standard metric, let $\mathcal{J}_D$ be the Jacobi operator. Let $E^D_{\lambda}:=\{B\, |\mathcal{J}_D (B)=\lambda B\}$ be the space of the eigenforms with eigenvalue $\lambda$. Then
$\dim(E^D_{-(m-4)})\ge m+1$. In particular, the smallest eigenvalue has the estimate $\lambda_1(\mathcal{J}_D)\le -(m-4)$.
\end{theorem}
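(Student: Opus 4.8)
The plan is to exhibit the required eigen-objects explicitly from the conformal geometry of $\mathbb{S}^m$ and to verify directly that they are genuine eigenvectors with the stated eigenvalue, rather than merely test objects with negative index form. Write $x^{1},\dots,x^{m+1}$ for the restrictions to $\mathbb{S}^m$ of the ambient linear coordinates and put $X_a=\nabla x^a$. The one identity driving everything is $\operatorname{Hess}(x^a)=-x^a g$, i.e. $\nabla_Y X_a=-x^a\,Y$ for every $Y$, whence $\sum_i\nabla^2_{e_i,e_i}X_a=-X_a$; these $m+1$ gradient conformal fields are the $m+1$ ``extra'' directions of the conformal group of the sphere. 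For part (i) set $V_a:=du(X_a)\in\Gamma(E)$ and expand $\nabla^{*}\nabla V_a$ by the product rule: the cross term equals $-2x^a\sum_i(\nabla_{e_i}du)(e_i)=-2x^a\,\tau(u)=0$ by harmonicity, the Hessian term contributes $+V_a$, and the leading term is $(\nabla^{*}\nabla du)(X_a)$. The Weitzenb\"ock (Eells--Sampson) identity for a harmonic map on $\mathbb{S}^m$ reads $(\nabla^{*}\nabla du)(Y)=-(m-1)\,du(Y)-\sum_i R^N(du(e_i),du(Y))du(e_i)$, so that $\nabla^{*}\nabla V_a=-(m-2)V_a+\sum_i R^N(V_a,du(e_i))du(e_i)$. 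The curvature sum is exactly the zeroth-order term subtracted in $\mathcal{J}_u$, and it cancels, giving $\mathcal{J}_u V_a=-(m-2)V_a$.

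For part (ii) the structural point is that $F:=R^D\in\Omega^{2}(\mathfrak{g}_E)$ is a \emph{harmonic} $\mathfrak{g}_E$-valued $2$-form: $d_DF=0$ is the Bianchi identity and $(d_D)^{*}F=0$ is the Yang--Mills equation. Put $B_a:=\iota_{X_a}F$. Using $\nabla_Y X_a=-x^aY$ and the antisymmetry of $F$, a short computation shows that $B_a$ is co-closed, $(d_D)^{*}B_a=0$ (the only surviving term is, up to sign, $((d_D)^{*}F)(X_a)$), so that $(d_D)^{*}d_DB_a=\Delta_DB_a$ for the Hodge--de Rham Laplacian $\Delta_D=(d_D)^{*}d_D+d_D(d_D)^{*}$. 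I then compute $\nabla^{*}\nabla B_a$ exactly as above: the cross term is $-2x^a(d_D)^{*}F=0$, the Hessian term gives $+B_a$, and the leading term $(\nabla^{*}\nabla F)(X_a,\cdot)$ has scalar part $-2(m-2)F(X_a,\cdot)$, since $\Delta_DF=0$ and the Riemannian Weitzenb\"ock curvature on $2$-forms of $\mathbb{S}^m$ equals $2(m-2)$. Finally $\Delta_DB_a=\nabla^{*}\nabla B_a+(m-1)B_a+(\text{bundle terms})$, so the scalar contributions combine to $-2(m-2)+1+(m-1)=-(m-4)$, while the $\mathfrak{g}_E$-curvature terms are cancelled by $\mathcal{R}^D$, yielding $\mathcal{J}_DB_a=-(m-4)B_a$.

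It remains to see that $a\mapsto V_a$ and $a\mapsto B_a$ are injective on $\mathbb{R}^{m+1}$, producing $m+1$ linearly independent eigen-objects inside a single eigenspace. Both cases follow from one observation. Suppose $\iota_{\nabla\phi}\omega\equiv0$ for $\phi=\sum_a c_a x^a\neq0$, where $\omega$ is the relevant harmonic object ($\omega=du$, $p=1$, or $\omega=R^D$, $p=2$). Since $d_D\omega=0$, Cartan's formula gives $\mathcal{L}^D_{\nabla\phi}\omega=d_D\iota_{\nabla\phi}\omega+\iota_{\nabla\phi}d_D\omega=0$, and because $\nabla_Y\nabla\phi=-\phi\,Y$ this is equivalent to $\nabla_{\nabla\phi}\omega=p\,\phi\,\omega$. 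Along an ascending gradient flow line of $\phi$ one therefore has the ODE $\tfrac{d}{dt}|\omega|^{2}=2p\,\phi\,|\omega|^{2}$. Normalising $\phi=x^{m+1}$, every flow line converges as $t\to+\infty$ to the pole where $\phi=1$ in infinite time, so $\int\phi\,dt$ diverges and $|\omega|^{2}$ would blow up exponentially unless $|\omega|^{2}\equiv0$ on that line; hence $\omega\equiv0$, contradicting that $u$ is nonconstant, respectively that $D$ is nonflat. Thus each eigenspace has dimension at least $m+1$, and in particular $\lambda_1(\mathcal{J}_u)\le-(m-2)$ and $\lambda_1(\mathcal{J}_D)\le-(m-4)$.

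The main obstacle is the curvature bookkeeping in part (ii): one must check that the $\mathfrak{g}_E$-curvature terms produced by the $2$-form Weitzenb\"ock formula for $\nabla^{*}\nabla F$, together with the $1$-form Weitzenb\"ock correction appearing in $\Delta_DB_a$, cancel \emph{pointwise} against the zeroth-order operator $\mathcal{R}^D$ of the Bourguignon--Lawson second-variation formula \eqref{eq:12}. This is precisely the pointwise sharpening of Simons' index computation, and it is the only genuinely delicate sign-and-contraction check. Case (i) is cleaner because $\mathcal{J}_u$ is literally $\nabla^{*}\nabla$ plus a zeroth-order curvature term, so no passage through the Hodge Laplacian is needed and the single $R^N$-term cancels at once; the different numerology $-(m-4)$ versus $-(m-2)$ traces exactly to the extra $(m-1)$ contributed in (ii) when $(d_D)^{*}d_D$ is rewritten, via co-closedness, through $\Delta_D$.
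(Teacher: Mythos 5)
Your eigenvalue computations for $V_a=du(\nabla x^a)$ and $B_a=\iota_{\nabla x^a}R^D$ are exactly the paper's Propositions \ref{prop:31} and \ref{prop:33}: the same sections, the same use of $\operatorname{Hess}(x^a)=-x^a g$ and $\Delta\nabla x^a=-\nabla x^a$, the Weitzenb\"ock formulae \eqref{eq:24}, \eqref{eq:25}, \eqref{eq:26}, co-closedness of $B_a$ (Lemma 7.3 of Bourguignon--Lawson), and the same scalar bookkeeping $-2(m-2)+1+(m-1)=-(m-4)$. One caveat: you defer the cancellation of the $\mathfrak{g}_E$-curvature terms, i.e.\ $2\mathcal{R}^D(B_a)-\mathcal{R}^D(R^D)(\nabla x^a,\cdot)=0$, calling it the one delicate check but not carrying it out; the paper does it in two lines, $\sum_j\bigl([R^D_{e_j,X},R^D_{\nabla \ell,e_j}]+[R^D_{\nabla \ell,e_j},R^D_{e_j,X}]\bigr)=0$, and you should include that verification to close the argument. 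Where you genuinely diverge is the nonvanishing step. The paper argues the two cases separately: for maps, $du(\nabla\ell)\equiv 0$ forces $u$ to be constant along the gradient flow lines, which all limit to the sink, so $u$ is constant (Proposition \ref{prop:32}); for connections, $\iota_{\nabla\ell}R^D\equiv 0$ means the family $D_s=\widetilde{\Psi}_{-s}D$ is constant, so $D$ is pulled back from a single fibre and hence flat (Proposition \ref{prop:34}, via (2.34) of Bourguignon--Lawson). Your unified argument --- from $d_D\omega=0$ and $\iota_{\nabla\phi}\omega=0$ the covariant Cartan formula and $\nabla_Y\nabla\phi=-\phi Y$ give $D_{\nabla\phi}\omega=p\phi\,\omega$, hence the ODE $\frac{d}{dt}|\omega|^2=2p\phi|\omega|^2$ along flow lines, which forces $|\omega|^2\equiv 0$ by boundedness because $\int\phi\,dt\to+\infty$ toward the sink --- is correct and treats both cases at once using only $d_D du=0$ and the Bianchi identity; it avoids the lift to the principal bundle in the Yang--Mills case and is arguably more elementary there, at the cost of the (routine, but worth writing out) verification that $d_D\iota_Z+\iota_Z d_D$ computes the covariant Lie derivative for bundle-valued forms.
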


Since the multiplicity of negative eigenvalues $-(m-2)$, or $-(m-4)$, in each case above, is given by the dimension of the eigenspace, which contributes to the Morse index, the theorem above does imply the earlier results of Simons \cite{Simons}, Xin \cite{Xin}, El Soufi \cite{ElSou},  Nayatani and Urakawa \cite{NU}. The proof of the part (i) is known to experts even though we did not find the statement of the result in the literature. Our contribution is  a very simple proof in this case. A result for minimal submanifolds in $\mathbb{S}^n$ is also obtained. In particular, Theorem \ref{thm:main2} generalizes Simons' eigenvalue estimate for the Jacobi operator for the minimal hypersurface in $\mathbb{S}^{n}$ to high codimensional case.  We remark that besides the stability and Morse index estimate, the nullity estimates for non totally geodesic closed  minimal spheres/varieties in higher dimensional spheres were obtained in \cite{Simons-Ann} much earlier. The stability issue for Yang-Mills fields was also studied in dimension four in \cite{Taubes}. It is  interesting to study for what manifold $N$ ($M$) and the property of  the map $u$ (of Yang-Mills connection $D$)  the equality cases in the above theorem for  either harmonic maps and for the Yang-Mills fields hold. The Morse index estimate for harmonic 2-sphere plays an important role in the application \cite{MM}, where a lower estimate was proved for harmonic $2$-sphere into a manifold with positive isotropic curvature.  The proof of \cite{MM} used a different approach which relied on some curvature conditions of $N$ and a decomposition  theorem of holomorphic vector bundles over $\mathbb{S}^2$. The way of using   the complex structure of the normal bundle and the construction of holomorphic sections  have their precedences in \cite{Ejiri} (cf. also Theorem 3.1.5 of \cite{Simons-Ann} and \cite{SY}). There exist more recent lower Morse index estimates \cite{Kap, KNPP} for two-sphere and projective planes inside a high dimensional spheres, related to the extremal metrics for higher eigenvalues, in terms of the so-called spectral index. Another interesting question is to have a lower estimate on the multiplicity and Morse index for the maps or minimal submanifolds which are not holomorphic or anti-holomorphic between K\"ahler manifolds when one of them is an irreducible Hermitian symmetric space.

\section{Preliminaries}
Let $E$ be a Riemannian vector bundle over $M$ and let $D$ be a connection compatible with the metric. We shall denote the Riemannian curvature of $(M, g)$ by $R^M$. The curvature of $D$ shall be denoted as $R^D$. Recall that
$R^D_{X, Y}=D_XD_Y-D_YD_X -D_{[X, Y]}$ is valued in the endomorphism bundle of $E$.  In fact the image is in $\mathfrak{so}(n)$ with $n$ being the dimension of $E$.  Recall that (cf. (2.3) of \cite{Ni-holo})  for any $A\in \mathfrak{so}(n)$,
\begin{equation}\label{eq:21}
\langle A, z\wedge w\rangle =\langle A(w), z\rangle.
\end{equation}
In the case of the Riemannian curvature we have $R_{X, Y}=R(X\wedge Y)$ and with our convention
$$
\langle R(X\wedge Y), Z\wedge W\rangle =\langle R_{X, Y} W, Z\rangle=R(X, Y, Z, W).
$$
 Let $\Omega^p(E)$ be the $p$-forms valued in $E$. The following Bochner formula is well known (cf. Proposition 1.3.4 of \cite{Xin-book}).
\begin{proposition}\label{prop:21}Let $\omega\in \Omega^p(E)$. Then
$$
\Delta_{d_D}\omega \doteqdot \left(d_D d_D^*+ d_D^* d_D \right) \omega=-\Delta \omega +S
$$
where  $\Delta =\sum_{j=1}^n \nabla^2_{e_j, e_j}$, $S$ is defined for any $X_1, \cdots, X_p\in \mathcal{X}(M)$ that
$$
S(X_1, \cdots, X_p)=-\sum_{j=1}^m\sum_{k=1}^p (R_{e_j, X_k} \cdot \omega)( X_1, \cdots, \hat{(e_j)}_k, \cdots, X_p).
$$
Here $\{e_j\}$ is an orthonormal basis of $T_pM$.
\end{proposition}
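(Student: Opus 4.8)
The plan is to prove the identity pointwise, as a Weitzenb\"ock-type formula. Since every term is tensorial, it suffices to fix $x\in M$ and work in a synchronous orthonormal frame $\{e_a\}_{a=1}^m$, i.e.\ one with $(\nabla_{e_a}e_b)(x)=0$; this annihilates the first derivatives of the frame at $x$, so that at $x$ the operator $\nabla_{e_a}$ commutes with both $e^a\wedge(\cdot)$ and the interior product $\iota_{e_b}$. Throughout, $\nabla$ denotes the connection on $\Lambda^pT^*M\otimes E$ induced by the Levi-Civita connection of $g$ and by $D$, and $R$ its curvature; thus $R$ is the total curvature, combining $R^M$ acting on the form indices with $R^D$ acting on $E$.

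First I would record the standard first-order expressions
$$
d_D\omega=\sum_a e^a\wedge\nabla_{e_a}\omega,\qquad d_D^*\omega=-\sum_a \iota_{e_a}\nabla_{e_a}\omega,
$$
together with the single algebraic identity that drives the whole computation, the Clifford-type anticommutation relation $\iota_{e_a}(e^b\wedge\alpha)=\delta_{ab}\,\alpha-e^b\wedge\iota_{e_a}\alpha$.

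Next I would evaluate the two compositions at $x$. Using that $\nabla_{e_a}$ passes through $e^b\wedge(\cdot)$ and $\iota_{e_b}$ at $x$, one gets $d_Dd_D^*\omega=-\sum_{a,b}e^a\wedge\iota_{e_b}\nabla_{e_a}\nabla_{e_b}\omega$, while the anticommutation relation gives
$$
d_D^*d_D\omega=-\sum_a\nabla_{e_a}\nabla_{e_a}\omega+\sum_{a,b}e^b\wedge\iota_{e_a}\nabla_{e_a}\nabla_{e_b}\omega.
$$
The first sum is $\sum_a\nabla^2_{e_a,e_a}\omega=\Delta\omega$, which supplies the $-\Delta\omega$ term. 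Adding the two compositions and relabelling $a\leftrightarrow b$ in one double sum, the second-derivative contributions combine into a commutator,
$$
\Delta_{d_D}\omega=-\Delta\omega+\sum_{a,b}e^a\wedge\iota_{e_b}\big(\nabla_{e_b}\nabla_{e_a}-\nabla_{e_a}\nabla_{e_b}\big)\omega,
$$
and by the curvature convention $R_{e_a,e_b}=\nabla_{e_a}\nabla_{e_b}-\nabla_{e_b}\nabla_{e_a}$ (the bracket term vanishing at $x$) the last sum is, up to sign, the curvature operator $\sum_{a,b}e^a\wedge\iota_{e_b}R_{e_a,e_b}\omega$. This already exhibits $\Delta_{d_D}=-\Delta+S$ with $S$ a purely zeroth-order (curvature) term.

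It then remains to unpack this operator into the stated tensorial form. Evaluating $\sum_{a,b}e^a\wedge\iota_{e_b}R_{e_a,e_b}\omega$ on $(X_1,\dots,X_p)$, the wedge expands as an alternating sum over $k$ in which $e^a(X_k)=\langle e_a,X_k\rangle$ is contracted against the first curvature slot, replacing $e_a$ by $X_k$ and producing $R_{X_k,e_b}$; transporting $e_b$ from the first argument into the $k$-th slot of the $p$-form contributes a second sign that cancels the one from the wedge expansion. Using \eqref{eq:21} and the antisymmetry of $R$ in its two vector arguments then yields $S$ in exactly the form $-\sum_{j}\sum_{k}(R_{e_j,X_k}\cdot\omega)(X_1,\dots,\hat{(e_j)}_k,\dots,X_p)$, with $e_j$ inserted in the $k$-th slot. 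I expect this last step to be the only real obstacle: one must keep three independent sign sources in order---the alternation of the wedge, the transpositions placing $e_j$ into slot $k$, and the antisymmetry of $R$---and fix the curvature sign convention consistently with the paper's normalization $\langle R_{X,Y}Y,X\rangle>0$ on $\mathbb{S}^m$, so that the resulting curvature term carries the sign displayed in the statement.
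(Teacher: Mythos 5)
Your strategy is the standard Weitzenb\"ock computation and is essentially the proof the paper has in mind (the paper itself gives no details, only pointing to the formulae for $d_D$ and $d_D^*$ in the two lemmas and to Proposition 1.3.4 of \cite{Xin-book}): write $d_D=\sum_a e^a\wedge\nabla_{e_a}$ and $d_D^*=-\sum_a\iota_{e_a}\nabla_{e_a}$ in a synchronous frame, use the anticommutation relation $\iota_{e_a}(e^b\wedge\alpha)=\delta_{ab}\,\alpha-e^b\wedge\iota_{e_a}\alpha$ to peel off $-\Delta\omega$, and recognize the remaining second-order terms as a curvature commutator. Everything up to and including the identity $\Delta_{d_D}\omega=-\Delta\omega+\sum_{a,b}e^a\wedge\iota_{e_b}\bigl(\nabla_{e_b}\nabla_{e_a}-\nabla_{e_a}\nabla_{e_b}\bigr)\omega$ is correct and complete.

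The one step you leave undone is precisely the one you flag as the ``only real obstacle,'' and it does not come out the way you assert. Carrying out the bookkeeping, the three sign sources are: an overall $-1$ because $\nabla_{e_b}\nabla_{e_a}-\nabla_{e_a}\nabla_{e_b}=-R_{e_a,e_b}$ at $x$; a factor $(-1)^{k-1}$ from the wedge alternation times $(-1)^{k-1}$ from transporting $e_b$ from the first slot of the $p$-form into the $k$-th slot, which cancel; and a final $-1$ from $R_{X_k,e_b}=-R_{e_b,X_k}$. The product is $+1$, so the computation lands on $S(X_1,\dots,X_p)=+\sum_{j,k}(R_{e_j,X_k}\cdot\omega)(X_1,\dots,(e_j)_k,\dots,X_p)$, the \emph{opposite} of the sign displayed in the Proposition. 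You should not try to force the minus sign: the plus sign is the one consistent with the rest of the paper. Specializing to $p=1$ and trivial $E$, the plus sign gives the classical Bochner identity $\Delta_d\omega=-\Delta\omega+\omega(\operatorname{Ric}(\cdot))$, in agreement with \eqref{eq:24} and \eqref{eq:25}; the minus sign would produce $-\omega(\operatorname{Ric}(\cdot))$ and $-\mathcal{R}^D(B)$, contradicting Proposition \ref{prop:22}. So your derivation is sound and your method matches the paper's; the residual discrepancy is a sign slip in the statement (or in the implicit convention for the derivation action $R_{X,Y}\cdot\omega$ on $\wedge^pT^*M\otimes E$), not a gap in your argument --- but as written your final sentence claims a sign that your own computation does not deliver, and that needs to be fixed one way or the other.
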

The curvature term  $R_{e_\alpha, X_k}\cdot \omega$ acts on $\wedge^p T^*M\otimes E$ as a derivation.
The result follows from the corresponding formulae for $d_D$ and $d_D^*$.
 \begin{lemma} For $\omega\in \Omega^k(E)$ and vector fields $X_0, \cdots, X_k$,
\begin{eqnarray}\label{eq:exter-pform-2}
d_D\omega(X_0, \cdots, X_k)&=&\sum_{i=0}^k (-1)^i D_{X_i}(\omega(X_0, \cdots, \hat{X_i}, \cdots, X_k))\\
&\quad&+\sum_{0\le i<j\le k} (-1)^{i+j} \omega([X_i, X_j], X_0, \cdots, \hat{X_i}, \cdots, \hat{X_j}, \cdots, X_k).\nonumber
\end{eqnarray}
\end{lemma}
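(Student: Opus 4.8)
The plan is to start from the intrinsic definition of the covariant exterior derivative as the alternation of the covariant derivative of $\omega$, and then to unwind that definition into the stated Cartan-type identity. Regarding $\omega\in\Omega^k(E)$ as a skew-symmetric $E$-valued $k$-tensor, its covariant derivative $D\omega$ is an $E$-valued $(k+1)$-tensor, and $d_D$ is by definition its full antisymmetrization,
$$
d_D\omega(X_0,\dots,X_k)=\sum_{i=0}^k(-1)^i (D_{X_i}\omega)(X_0,\dots,\hat{X_i},\dots,X_k).
$$
Since both sides of the asserted identity are $\R$-linear in $\omega$, it suffices to manipulate this expression directly.

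First I would expand each term $(D_{X_i}\omega)(\cdots)$ using the derivation property of the connection on the tensor bundle $\wedge^k T^*M\otimes E$. Writing $D$ for the connection on $E$ and $\nabla$ for the Levi--Civita connection on $TM$, this property reads
$$
(D_{X}\omega)(Y_1,\dots,Y_k)=D_X\big(\omega(Y_1,\dots,Y_k)\big)-\sum_{\ell=1}^k \omega(Y_1,\dots,\nabla_X Y_\ell,\dots,Y_k).
$$
Substituting this into the definition above immediately produces the first sum in the statement, namely $\sum_i (-1)^i D_{X_i}\big(\omega(X_0,\dots,\hat{X_i},\dots,X_k)\big)$, together with a double sum of correction terms in which some argument $X_\ell$ has been replaced by $\nabla_{X_i}X_\ell$.

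The heart of the argument is to reassemble these correction terms into the bracket sum. I would group the two corrections attached to an unordered pair $\{i,j\}$ with $i<j$: one arises with outer index $i$ and inserts $\nabla_{X_i}X_j$ in place of $X_j$, the other arises with outer index $j$ and inserts $\nabla_{X_j}X_i$ in place of $X_i$. Using the skew-symmetry of $\omega$ to move the differentiated argument into the leading slot, these two contributions acquire opposite overall signs and hence combine into $(-1)^{i+j}\,\omega\big(\nabla_{X_i}X_j-\nabla_{X_j}X_i,\,X_0,\dots,\hat{X_i},\dots,\hat{X_j},\dots,X_k\big)$; the torsion-free identity $\nabla_{X_i}X_j-\nabla_{X_j}X_i=[X_i,X_j]$ then yields precisely the claimed second sum.

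The main obstacle is purely combinatorial: keeping track of the signs generated when the slot containing $\nabla_{X_i}X_j$ is transposed to the front past the omitted argument, and verifying that after removing $X_i$ the relevant position shift produces the uniform factor $(-1)^{i+j}$. To sidestep the bookkeeping entirely, I would, if desired, offer the alternative verification in which one first checks that the right-hand side is $C^\infty(M)$-multilinear and alternating in the $X_i$ (the inhomogeneous terms arising from differentiating a multiplier function cancel between the two sums), so that the identity reduces to a pointwise statement; one then evaluates at a fixed point $p$ using vector fields with $[X_i,X_j](p)=0$ and $(\nabla X_i)(p)=0$. In that synchronous frame the bracket sum vanishes at $p$ and $(D_{X_i}\omega)(\cdots)=D_{X_i}\big(\omega(\cdots)\big)$ at $p$, so the two sides coincide there, and the arbitrariness of $p$ completes the proof.
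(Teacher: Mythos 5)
The paper does not actually prove this lemma: it is quoted as a standard fact (the global Cartan-type formula for the exterior covariant derivative), with the subsequent lemma \eqref{eq:exter-covar1-2} giving the equivalent expression as the alternation of $D\omega$. Your proposal supplies a correct proof, but note that its starting point --- ``$d_D$ is by definition the full antisymmetrization of $D\omega$'' --- is precisely the paper's \emph{second} lemma, so what you have really proved is the equivalence of \eqref{eq:exter-pform-2} and \eqref{eq:exter-covar1-2}. That is a legitimate route provided the antisymmetrization formula is adopted as the definition of $d_D$; the usual objection (that this definition appears to depend on the auxiliary torsion-free connection $\nabla$) is answered by your own computation, since the right-hand side of \eqref{eq:exter-pform-2} involves no connection on $TM$ at all, so the independence of $\nabla$ comes for free. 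If instead one takes the local definition $d_D(\alpha\otimes s)=d\alpha\otimes s+(-1)^{|\alpha|}\alpha\wedge Ds$ as in Bourguignon--Lawson, one still owes a (short) verification that this agrees with the antisymmetrized covariant derivative; you should say explicitly which definition is in force. The sign bookkeeping in your main computation is right: the correction term with outer index $i$ and inner slot $j>i$ carries $-(-1)^i$ and a transposition past $j-1$ arguments, giving $(-1)^{i+j}\omega(\nabla_{X_i}X_j,\dots)$, while the partner term gives $-(-1)^{i+j}\omega(\nabla_{X_j}X_i,\dots)$, and torsion-freeness produces $[X_i,X_j]$. Your alternative argument (tensoriality of both sides, then evaluation in a coordinate frame that is normal at $p$, where brackets vanish identically and $\nabla X_i(p)=0$) is also sound and is the quickest way to avoid the combinatorics.
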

\begin{lemma} Let $\nabla$ be a torsion free connection on $TM$. For $\omega\in \Omega^k(E)$,
\begin{equation}\label{eq:exter-covar1-2}
d_D\omega(X_0, X_1, \cdots, X_k)=\sum_{i=0}^k (-1)^i (D_{X_i} \omega)(X_0, \cdots, \hat{X}_i, \cdots, X_k).
\end{equation}
\end{lemma}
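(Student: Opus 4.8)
The plan is to deduce \eqref{eq:exter-covar1-2} directly from the preceding formula \eqref{eq:exter-pform-2} by converting the ordinary derivatives $D_{X_i}(\omega(\cdots))$ appearing there into the full covariant derivatives $(D_{X_i}\omega)(\cdots)$ of the $E$-valued form, and then checking that the resulting correction terms exactly cancel the bracket terms, using only that $\nabla$ is torsion free.

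First I would invoke the Leibniz rule for the connection $D$ induced on $\wedge^k T^*M\otimes E$ by $\nabla$ on $T^*M$ and $D$ on $E$: for each fixed $i$,
\begin{equation*}
D_{X_i}\big(\omega(X_0,\dots,\hat{X_i},\dots,X_k)\big)=(D_{X_i}\omega)(X_0,\dots,\hat{X_i},\dots,X_k)+\sum_{j\ne i}\omega(X_0,\dots,\nabla_{X_i}X_j,\dots,\hat{X_i},\dots,X_k),
\end{equation*}
where in the last sum the entry $X_j$ has been replaced by $\nabla_{X_i}X_j$ in its own slot. Substituting this into \eqref{eq:exter-pform-2}, the terms $(-1)^i(D_{X_i}\omega)(X_0,\dots,\hat{X_i},\dots,X_k)$ assemble to precisely the right-hand side of \eqref{eq:exter-covar1-2}, so it remains only to show that the correction sum cancels the bracket sum.

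Next I would reorganize the correction sum pairwise. For a fixed ordered pair $(i,j)$ with $i\ne j$, I move the entry $\nabla_{X_i}X_j$ to the front of $\omega$, counting the number of arguments preceding it (which is $j-1$ when $i<j$, since $X_i$ has been deleted, and $j$ when $i>j$) to pick up the sign. Collecting the two contributions belonging to an unordered pair $\{i,j\}$ with $i<j$ — one with outer index $i$ and one with outer index $j$ — the signs combine to give $(-1)^{i+j}\,\omega(\nabla_{X_j}X_i-\nabla_{X_i}X_j,X_0,\dots,\hat{X_i},\dots,\hat{X_j},\dots,X_k)$. This is the only place the hypothesis enters: since $\nabla$ is torsion free, $\nabla_{X_i}X_j-\nabla_{X_j}X_i=[X_i,X_j]$, so the expression equals $-(-1)^{i+j}\omega([X_i,X_j],X_0,\dots,\hat{X_i},\dots,\hat{X_j},\dots,X_k)$, exactly minus the bracket term in \eqref{eq:exter-pform-2}. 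Hence the correction terms annihilate the bracket terms and \eqref{eq:exter-covar1-2} follows.

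The computation itself is elementary; the main obstacle is purely bookkeeping, namely keeping the slot indices and the alternating signs consistent as $\nabla_{X_i}X_j$ is shuffled to the front past the deleted entry $X_i$ (and, for the second member of each pair, also past $X_j$). I would verify the sign count in the two cases $i<j$ and $i>j$ separately to be safe, after which the pairwise cancellation is automatic and independent of the degree $k$.
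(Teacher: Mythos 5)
Your proof is correct. The paper states this lemma without proof (it is a standard identity, cited as background for the Bochner formula), so there is nothing to compare against; your derivation from the preceding formula \eqref{eq:exter-pform-2} via the Leibniz rule for the induced connection on $\wedge^k T^*M\otimes E$, with the torsion-free condition converting the pairwise correction terms into exactly minus the bracket terms, is the standard argument, and your sign bookkeeping (exponent $j-1$ for $i<j$, exponent $j$ for $i>j$, combining to $(-1)^{i+j}\omega(\nabla_{X_j}X_i-\nabla_{X_i}X_j,\dots)=-(-1)^{i+j}\omega([X_i,X_j],\dots)$) checks out.
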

Here  $(D_{Y} \omega)(X_1, \cdots,  X_k)=D_{Y}(\omega(X_1, \cdots, X_k))-\sum_{i=1}^k \omega(X_1, \cdots, \nabla_Y X_i, \cdots X_k).$ If $\{e_s\}$ is any frame with $(g_{st})=(\langle e_s, e_t\rangle)$,
$$
(d^*_D\omega)(X_1, \cdots, X_{k-1})=-\sum_{s, t} g^{st}(D_{e_s} \omega)(e_t, X_1, \cdots, X_{k-1}).
$$
We also apply the formula to the case $E=\mathfrak{so}(E')$ with $E'$ being a Riemaniann vector bundle. The curvature of $E'$ acts on $\mathfrak{so}(E')$ via the adjoint action in this case.

Let $u:(M, g) \to (N, h)$ be a harmonic map. Let $E=u^{-1} (TN)$ be the pull back bundle equipped with the Riemannian metric $h$. The curvature of $E$ will be the pull back of $R^N$, namely $u^* R^N$. Now $du: TM\to TN$ can be viewed a $1$-form valued in $E$. Namely $du\in \Omega^1(E)$. In fact for any smooth map $d_D du =0$. The harmonic equation amounts to $d^*_D du=0$.  In this case $du$ is $d_D$-harmonic with $D$ being the associated connection on $E$. Hence  one  can apply Proposition \ref{prop:21} and  expresses the curvature term more explicitly to arrive the well-known formula for a harmonic map $u$:
\begin{equation}\label{eq:24}
\Delta du= - \sum_{\alpha}R^N_{ du(\cdot), du(e_\alpha)} du(e_\alpha)+du(\operatorname{Ric}^M(\cdot)).
\end{equation}

For the case of Yang-Mills fields concerning a Riemannian vector bundle $E$ over $(M, g)$, we apply the formula to the $p$-forms valued in $\mathfrak{so}(E)$. In particular for the $1$-form $B$ and $2$-form $\varphi$  the following results (cf. Theorem 3.2 and Theorem 3.10 of \cite{BL}) follow from Proposition \ref{prop:21}.

\begin{proposition}\label{prop:22}  Let $B\in \Omega^1(\mathfrak{so}(E))$. Then
\begin{equation}\label{eq:25}
\Delta_{d_D}B=-\Delta B +B(\Ric^M(\cdot ))+\mathcal{R}^D(B)
\end{equation}
with $\mathcal{R}^D(B)(X)=\sum_{j=1}^m [R^D_{e_j, X}, B(e_j)].$  Let  $\varphi\in \Omega^2(\mathfrak{so}(E))$. Then
\begin{equation}\label{eq:26}
\Delta_{d_D} \varphi=-\Delta \varphi +\varphi \cdot (2 \operatorname{Ric}^M\wedge \operatorname{id}-2R^M)+\mathcal{R}^D (\varphi).
\end{equation}
Here $\mathcal{R}^D(\varphi)(X, Y)=\sum_{j=1}^m \left( [R^D_{e_j, X}, \varphi_{e_j, Y}]-[R^D_{e_j, Y}, \varphi_{e_j, X}]\right)$.
\end{proposition}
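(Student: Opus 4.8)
The plan is to deduce both identities directly from Proposition \ref{prop:21}, applied with the coefficient bundle taken to be $\mathfrak{so}(E)$, whose induced curvature acts by the adjoint (commutator) action, i.e.\ $R^{\mathfrak{so}(E)}_{X,Y}=[R^D_{X,Y},\,\cdot\,]$. The key observation is that the curvature $R$ entering the term $S$ of Proposition \ref{prop:21} is the curvature of the tensor-product connection on $\wedge^p T^*M\otimes\mathfrak{so}(E)$, and therefore splits as $R_{e_j,X}=R^M_{e_j,X}\otimes\id+\id\otimes\ad(R^D_{e_j,X})$, with $R^M$ acting on the form indices and $\ad(R^D_{e_j,X})=[R^D_{e_j,X},\,\cdot\,]$ acting on the $\mathfrak{so}(E)$-values. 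Accordingly $S$ separates into a purely Riemannian piece and a piece carrying the connection curvature, and I would compute these two pieces separately before adding them to $-\Delta B$ (resp.\ $-\Delta\varphi$).

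For the connection-curvature piece the computation is purely formal: substituting $\ad(R^D_{e_j,X_k})$ for the coefficient action in the expression for $S$ and evaluating on the remaining frame vectors reproduces, term by term, $\mathcal{R}^D(B)(X)=\sum_j[R^D_{e_j,X},B(e_j)]$ when $p=1$ and $\mathcal{R}^D(\varphi)(X,Y)=\sum_j\big([R^D_{e_j,X},\varphi_{e_j,Y}]-[R^D_{e_j,Y},\varphi_{e_j,X}]\big)$ when $p=2$. No curvature identities are needed here, only the bookkeeping of which slot is being contracted.

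The Riemannian piece is where the genuine content lies. For $p=1$ the derivation acts on $B$ only through the action of $R^M$ on the single form index, and the trace over the frame $\{e_j\}$ of $R^M_{e_j,X}$ is, by definition, the Ricci operator; with the sign conventions fixed above this produces the classical Bochner term $B(\Ric^M(\cdot))$ and recovers \eqref{eq:25}. For $p=2$ the curvature acts as a derivation on both form slots, and contracting over $\{e_j\}$ against the two slots produces two kinds of terms: the contractions of a slot against its own index collapse to $\Ric^M$ and assemble into $2\,\Ric^M\wedge\id$, while the mixed contractions coupling the two slots retain the full curvature operator and assemble into $-2R^M$ acting on $\varphi$, yielding \eqref{eq:26}. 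I expect this $p=2$ bookkeeping to be the main obstacle: one must track the signs coming from the derivation property, from duality on $T^*M$, and from the skew-symmetry of $\varphi$, and then invoke the pair symmetry and first Bianchi identity of $R^M$ to combine the mixed contractions into the single operator $-2R^M$ and to separate cleanly the trace (Ricci) contribution from the full-curvature contribution. By contrast the $p=1$ case and the entire connection-curvature piece are routine substitutions.
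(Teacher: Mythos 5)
Your proposal is correct and follows essentially the same route as the paper: the paper likewise obtains Proposition \ref{prop:22} by applying Proposition \ref{prop:21} to forms valued in $\mathfrak{so}(E)$, with the curvature acting on the coefficients via the adjoint (commutator) action and the Riemannian curvature acting on the form indices, deferring the detailed bookkeeping to Theorems 3.2 and 3.10 of \cite{BL}. Your write-up simply makes explicit the splitting of the tensor-product curvature and the Bianchi-identity step for $p=2$ that the paper leaves to the reference.
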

Here we use the convention as in \cite{Ni-holo} about $A\wedge B$, namely $$A\wedge B(x\wedge y)=\frac{1}{2}\left(A(x)\wedge B(y)+B(x)\wedge A(y)\right).$$ We also view the Riemannian curvature  as an operator $R^M: \mathfrak{so}(T_pM)\to \mathfrak{so}(T_pM)$ defined as above, namely $\langle R^M(x\wedge y), z\wedge w\rangle =R^M(x,y, z, w)$.

\section{Proof of Theorem \ref{thm:main}}

 We first derive some useful,  also well-known, formulae for the linear functions of $\mathbb{R}^{m+1}$ restricted to the unit sphere $\mathbb{S}^m$. We use $D$ to denote the standard derivative/connection on $\mathbb{R}^{m+1}$ and $\nabla$, the derivative/Levi-Civita connection of $\mathbb{S}^m$. For linear function $\ell(x)$, it is well-known that $\langle D \ell, x\rangle =\ell$. Hence $\nabla \ell =D\ell -\ell \cdot x$. Since $D^2\ell =0$, it is easy to compute that for $X, Y$ tangent to $\mathbb{S}^m$ we have
\begin{eqnarray*}
(\nabla^2 \ell)(X, Y)&=&XY \ell -\langle \nabla \ell, \nabla_X Y\rangle
= XY\ell -\langle D  \ell, \nabla_X Y\rangle \\
&=&XY\ell -\langle D  \ell, D_X Y\rangle+\langle D\ell, B(X, Y)\rangle=\langle \ell \cdot x, B(X, Y)\rangle=-\ell \langle X, Y\rangle.
\end{eqnarray*}
Here we have followed the convention of \cite{Simons-Ann} to define the second fundamental form $B(X, Y):=D_X Y-\nabla_X Y$, and  used that the second fundamental form of the sphere $B(X,Y)=-\langle X, Y\rangle \cdot x $. This then  implies that
\begin{equation}\label{eq:31}
\nabla_X \nabla \ell=-\ell X; \quad \quad \Delta \nabla \ell =- \nabla \ell.
\end{equation}
The first one is obvious. The second one can be done via the first and the commutator formula (using $\operatorname{Ric}^{\mathbb{S}^m}=(m-1)\operatorname{id}$). Or we choose a normal frame $\{e_\alpha\}$ with $\nabla_{e_\beta} e_\gamma=0$ at the point concerned, and compute
\begin{eqnarray}
\Delta \nabla \ell&=& \nabla_{e_\alpha}\nabla_{e_\alpha} \nabla \ell =-\nabla_{e_\alpha} \left(\ell e_\alpha\right)\nonumber\\
&=& -\langle e_\alpha,  \nabla \ell\rangle e_\alpha=-\nabla \ell. \label{eq:32}
\end{eqnarray}

Now we prove the part of Theorem \ref{thm:main} concerning  the harmonic maps. It follows from the two propositions below .

\begin{proposition}\label{prop:31} Assume that $u: \mathbb{S}^m \to (N, h)$ is a harmonic map. The associated section $X_\ell =du(\nabla \ell)$ of $E$ satisfies
$$
\mathcal{J}_u (X_\ell) =-(m-2) X_\ell.
$$
Namely if $X_\ell\ne 0$, it is an eigenvector of the Jacobi operator with eigenvalue $-(m-2)$.
\end{proposition}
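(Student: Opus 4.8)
The plan is to compute $\mathcal{J}_u(X_\ell)=\nabla^*\nabla X_\ell - R^N_{X_\ell,\,du(e_\alpha)}du(e_\alpha)$ head-on, using the Weitzenb\"ock identity \eqref{eq:24} for the harmonic map together with the elementary sphere identities \eqref{eq:31} for $\nabla\ell$. Since $\nabla^*\nabla=-\Delta$ with $\Delta=\sum_\alpha\nabla^2_{e_\alpha,e_\alpha}$, the whole computation reduces to evaluating the rough Laplacian of the section $X_\ell=du(\nabla\ell)$ of $E$.

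The central algebraic step is to view $du\in\Omega^1(E)$ and differentiate the contraction $du(\nabla\ell)$ twice by the Leibniz rule, working at a point in a normal frame $\{e_\alpha\}$ with $\nabla_{e_\beta}e_\gamma=0$ there. This yields
$$
\Delta\big(du(\nabla\ell)\big)=(\Delta du)(\nabla\ell)+2\sum_\alpha(\nabla_{e_\alpha}du)(\nabla_{e_\alpha}\nabla\ell)+du(\Delta\nabla\ell).
$$
Here one must keep in mind that $du$ is $E$-valued, so $\nabla_{e_\alpha}du$ uses the induced connection on $T^*M\otimes E$, while $\nabla_{e_\alpha}\nabla\ell$ uses the Levi-Civita connection on $T\mathbb{S}^m$; the factor $2$ on the cross term comes from the two ways of distributing the second derivative.

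Next I would dispatch the three terms. For the cross term, \eqref{eq:31} gives $\nabla_{e_\alpha}\nabla\ell=-\ell\,e_\alpha$, so $\sum_\alpha(\nabla_{e_\alpha}du)(\nabla_{e_\alpha}\nabla\ell)=-\ell\sum_\alpha(\nabla_{e_\alpha}du)(e_\alpha)=\ell\,d_D^*du=0$ by the harmonic map equation $d_D^*du=0$; this is the crucial cancellation. For the last term, the second identity in \eqref{eq:31} gives $\Delta\nabla\ell=-\nabla\ell$, hence $du(\Delta\nabla\ell)=-X_\ell$. For the first term I substitute \eqref{eq:24} with $\operatorname{Ric}^{\mathbb{S}^m}=(m-1)\operatorname{id}$, obtaining $(\Delta du)(\nabla\ell)=-\sum_\alpha R^N_{X_\ell,\,du(e_\alpha)}du(e_\alpha)+(m-1)X_\ell$.

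Assembling these gives $\Delta X_\ell=-\sum_\alpha R^N_{X_\ell,\,du(e_\alpha)}du(e_\alpha)+(m-2)X_\ell$, so that $\mathcal{J}_u(X_\ell)=-\Delta X_\ell-\sum_\alpha R^N_{X_\ell,\,du(e_\alpha)}du(e_\alpha)=-(m-2)X_\ell$, the two curvature contractions cancelling exactly. The only genuine obstacle is the bookkeeping in the Leibniz expansion together with the recognition that the cross term is precisely $\ell$ times the harmonic map operator $d_D^*du$; once that identification is made, everything else is a direct substitution of \eqref{eq:24} and \eqref{eq:31}.
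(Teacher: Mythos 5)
Your proposal is correct and follows essentially the same route as the paper: expand $\Delta(du(\nabla\ell))$ by the Leibniz rule in a normal frame, substitute the Weitzenb\"ock identity \eqref{eq:24} together with the sphere identities \eqref{eq:31}--\eqref{eq:32}, kill the cross term via the harmonic map equation $d_D^*du=0$, and observe that the two curvature contractions cancel in $\mathcal{J}_u(X_\ell)$. The paper's proof is the same computation in the same order, so there is nothing to add.
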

\begin{proof} Direct calculations with the help of (\ref{eq:24}) yield that
\begin{eqnarray*}
\Delta (du (\nabla \ell))&=&(\Delta du)(\nabla \ell)+2(\nabla_{e_j} du)(\nabla_{e_j} \nabla \ell)+du(\Delta \nabla \ell)\\
&=&-R^N_{du(\nabla\ell), du(e_j)} du(e_j)+du(\operatorname{Ric}^M (\nabla \ell))+2(\nabla_{e_j} du)(\nabla_{e_j} \nabla \ell)+du(\Delta \nabla \ell)\\
&=& -R^N_{ du(\nabla\ell), du(e_j),} du(e_j)+(m-1)du(\nabla \ell)-2\ell (\nabla_{e_j} du)(e_j)-d u(\nabla \ell)\\
&=&  -R^N_{du(\nabla\ell), du(e_j) } du(e_j)+(m-2)du(\nabla \ell).
\end{eqnarray*}
In the above, from line 2 to line 3 we used (\ref{eq:31}), (\ref{eq:32}), and that $\Ric^{\mathbb{S}^m}=(m-1)\operatorname{id}$. From line 3 to line 4 we used the harmonic map equation $(d_D)^* du=- (\nabla_{e_j} du)(e_j)=0$.
\end{proof}

\begin{proposition}\label{prop:32} If for a smooth map $u$ and for  some linear function $\ell$, $X_\ell=du(\nabla \ell)=0$, $u$ must be a constant map.
\end{proposition}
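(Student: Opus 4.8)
The plan is to exploit the geometry of the gradient field $\nabla\ell$ directly, rather than any PDE arising from $u$. Writing a nonzero linear function as $\ell(x)=\langle a,x\rangle$ for a fixed nonzero $a\in\mathbb{R}^{m+1}$, the computation preceding (\ref{eq:31}) gives $\nabla\ell = a-\ell\,x$, i.e.\ $\nabla\ell(x)$ is the orthogonal projection of the constant vector $a$ onto $T_x\mathbb{S}^m$. Hence $\nabla\ell$ vanishes precisely at the two poles $p_\pm=\pm a/|a|$ and is nonzero on $\mathbb{S}^m\setminus\{p_+,p_-\}$. (One must assume $\ell\not\equiv 0$, since otherwise $X_\ell$ vanishes for trivial reasons and the conclusion fails; in the application $\ell$ ranges over the coordinate functions, so this is harmless.) First I would record this zero set together with the observation that $\nabla\ell$ is everywhere tangent to the meridian great circles: for $x$ lying in the $2$-plane $\Pi=\mathrm{span}(a,x)$, the vector $a-\ell x$ again lies in $\Pi\cap x^\perp$, which is exactly the tangent line to the great circle $\Pi\cap\mathbb{S}^m$.

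The key step is then to integrate along these meridians. Given any $q\in\mathbb{S}^m$ with $q\ne p_\pm$, let $c:[0,\pi]\to\mathbb{S}^m$ be the unit-speed great-circle arc in $\Pi=\mathrm{span}(a,q)$ running from $p_-$ to $p_+$ through $q$. On the open arc, $c'(s)$ and $\nabla\ell(c(s))$ are two nonzero tangent vectors to the same one-dimensional great circle, hence parallel, so the hypothesis $X_\ell=du(\nabla\ell)\equiv 0$ yields $\frac{d}{ds}(u\circ c)(s)=du(c'(s))=0$ for $s\in(0,\pi)$. Thus $u\circ c$ is constant on $(0,\pi)$, and by continuity of $u$ it is constant on the closed arc $[0,\pi]$; in particular $u(q)=u(p_-)=u(p_+)$.

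Finally, since every point of $\mathbb{S}^m$ other than the two poles lies on such an arc, and all of these arcs share the common endpoints $p_\pm$, the value $u(q)=u(p_-)$ is the same constant for every $q$; continuity then extends this to the poles themselves, giving $u\equiv u(p_-)$. The argument is genuinely elementary, so I do not expect a serious analytic obstacle; the only points requiring care are the degenerate locus $\{p_+,p_-\}$ where $\nabla\ell$ vanishes, and the passage to the limit at the poles, both of which are dispatched by continuity of $u$ once the meridian is parametrized explicitly as above. (Equivalently one could phrase this through the gradient flow of $\ell$, whose non-stationary orbits run from $p_-$ to $p_+$, but I prefer the explicit great-circle parametrization so as to avoid invoking infinite-time convergence of the flow.)
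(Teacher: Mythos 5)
Your proof is correct and is essentially the paper's argument: both show that $u$ is constant along the trajectories of $\nabla\ell$, which all join the two antipodal zeros $\pm a/|a|$ of $\nabla\ell$, and then conclude by continuity of $u$ at those two points. The only difference is cosmetic --- you parametrize the trajectories as finite great-circle arcs rather than as orbits of the gradient flow $\Psi_s$, which sidesteps the (easy) infinite-time convergence to the sink that the paper justifies via the Hessian formula (\ref{eq:31}); your observation that one must assume $\ell\not\equiv 0$ is a fair point that the paper leaves implicit.
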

\begin{proof} This was proved in \cite{ElSou} for harmonic maps. We provide an argument below for the above general result for any $C^1$-maps. It is easy to see that $\ell$ attains a unique maximum point and a unique minimum point on $\mathbb{S}^m$. Consider the flow $\Psi_s$ generated by $\nabla \ell$. It has two fixed points.  One of them is a source $p_{-\infty}$ and the other is a sink $p_\infty$ due to the explicit Hessian of $\ell$ provided in (\ref{eq:31}). Consider the image  curve  $u(\Psi_s(p))$. Since it stays in a compact region,
$\lim_{s\to \infty} u(\Psi_s(p))=u(\lim_{s\to \infty}\Psi_s(p))=u(p_\infty)$, if at $s=0, \Psi_s(p)=p\ne p_{-\infty}$. On the other hand
$$
\frac{d}{ds}\left(u(\Psi_s(p)\right)=du\left(\frac{d}{ds} \Psi_s(p)\right)=du\left(\left.\nabla \ell\right|_{\Psi_s(p)}\right)=0.
$$
This implies that $u(p)=u(p_\infty)$ for any $p\in \mathbb{S}^{m}$ for all $p \ne p_{-\infty}$, which proves the claim.
\end{proof}

Since the space $\mathcal{H}^1$ of all linear functions of $\mathbb{R}^{m+1}$ is of dimension $m+1$, the gradient of their restrictions on $\mathbb{S}^m$ is of the same dimension due to the homogeneity. The above proposition asserts that $\{X_\ell=du(\nabla \ell)\,|\ell\in \mathcal{H}^1 \}$ is also a $(m+1)$-dimensional linear space if $u$ is not a constant map. This proves the harmonic map part of Theorem \ref{thm:main}.

Now we prove the part of  Theorem \ref{thm:main} concerning  Yang-Mills fields. We need the following lemma (cf. Lemma 7.3 of \cite{BL}), which can also be obtained by simple calculations.

\begin{lemma}[Bourguignon-Lawson] \label{lemma:BL} Let $B=\iota_{\nabla f} \varphi $ for some $\varphi\in \Omega^2(\mathfrak{so}(E))$ with $(d_D)^* \varphi =0$, where $f$ is a smooth function on $M$. Then $(d_D)^* B=0$.
\end{lemma}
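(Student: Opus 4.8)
The plan is to compute $(d_D)^*B$ directly from the pointwise divergence formula $(d_D^*\omega)(X_1,\dots,X_{k-1})=-\sum_j (D_{e_j}\omega)(e_j,X_1,\dots,X_{k-1})$ recorded above, specialized to the $\mathfrak{so}(E)$-valued $1$-form $B$. For a $1$-form this reads $(d_D)^*B=-\sum_j (D_{e_j}B)(e_j)$, so the whole statement reduces to showing that $\sum_j (D_{e_j}B)(e_j)$ vanishes at each point $p$. To evaluate this I would fix an orthonormal frame $\{e_j\}$ that is normal at $p$, i.e.\ $\nabla_{e_j}e_k=0$ at $p$, exactly as in the computation of \eqref{eq:32}; this kills all the Christoffel-type correction terms and lets me work with ordinary covariant derivatives of the components.

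Since $B(e_j)=\varphi(\nabla f,e_j)$, the Leibniz rule for the covariant derivative of the $2$-form $\varphi$ gives, at $p$,
\[
(D_{e_j}B)(e_j)=D_{e_j}\big(\varphi(\nabla f,e_j)\big)=(D_{e_j}\varphi)(\nabla f,e_j)+\varphi(\nabla_{e_j}\nabla f,e_j),
\]
the terms involving $\nabla_{e_j}e_j$ dropping out by the choice of frame. Summing over $j$ splits the quantity into two pieces: a ``derivative of $\varphi$'' piece $\sum_j (D_{e_j}\varphi)(\nabla f,e_j)$ and a ``Hessian'' piece $\sum_j \varphi(\nabla_{e_j}\nabla f,e_j)$.

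For the first piece I would use the antisymmetry of $\varphi$, hence of $D_{e_j}\varphi$, to write $(D_{e_j}\varphi)(\nabla f,e_j)=-(D_{e_j}\varphi)(e_j,\nabla f)$, so that $\sum_j (D_{e_j}\varphi)(\nabla f,e_j)=(d_D^*\varphi)(\nabla f)=0$ by hypothesis. For the second piece, expanding $\nabla_{e_j}\nabla f=\sum_k (\nabla^2 f)(e_j,e_k)\,e_k$ gives $\sum_j \varphi(\nabla_{e_j}\nabla f,e_j)=\sum_{j,k}(\nabla^2 f)(e_j,e_k)\,\varphi(e_k,e_j)$, which vanishes because it pairs the symmetric Hessian $(\nabla^2 f)(e_j,e_k)$ against the antisymmetric $\varphi(e_k,e_j)$. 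Hence $\sum_j (D_{e_j}B)(e_j)=0$ and $(d_D)^*B=0$. The computation is essentially bookkeeping; the only point requiring care is tracking the sign conventions in the codifferential formula and confirming that the Hessian term really cancels through the symmetry/antisymmetry pairing, which is the crux of the argument.
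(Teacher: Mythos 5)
Your computation is correct: the codifferential of $B=\iota_{\nabla f}\varphi$ splits into $(d_D^*\varphi)(\nabla f)$, which vanishes by hypothesis, plus the pairing of the symmetric Hessian of $f$ against the antisymmetric $\varphi$, which vanishes identically. The paper itself gives no proof, deferring to Lemma 7.3 of Bourguignon--Lawson and remarking that the statement ``can also be obtained by simple calculations''; your argument is exactly that simple calculation, so there is nothing to flag.
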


The proof of the theorem follows a parallel strategy.

\begin{proposition}\label{prop:33} Assume that $R^D$ is Yang-Mills of a Yang-Mills connection $D$ of $E$. Let $B_\ell =\iota_{\nabla \ell} R^D=R^D_{\nabla \ell, (\cdot)}$. Then
$$
\mathcal{J}_D (B_\ell) =-(m-4) B_\ell.
$$
Namely if $B_\ell\ne 0$, it is an eigenform of the Jacobi operator with eigenvalue $-(m-4)$.
\end{proposition}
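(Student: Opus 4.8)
The plan is to mirror exactly the strategy used in Proposition~\ref{prop:31} for harmonic maps, replacing the section $du(\nabla\ell)$ by the $1$-form $B_\ell=\iota_{\nabla\ell}R^D$ and the formula \eqref{eq:24} by its Yang-Mills analogue \eqref{eq:25}. First I would record the two facts that make $B_\ell$ admissible as a variational field: that $R^D$ is $d_D$-harmonic (it is $d_D$-closed by the Bianchi identity and $(d_D)^*$-closed by the Yang-Mills equation), and that consequently Lemma~\ref{lemma:BL} with $f=\ell$ gives $(d_D)^*B_\ell=0$. This last point is what allows the contraction term to drop out, playing the role that the harmonic map equation $(d_D)^*du=0$ played in the previous proof.

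The computational core is to expand $\Delta B_\ell=\Delta(\iota_{\nabla\ell}R^D)$ by the product rule, in direct analogy with the first display of Proposition~\ref{prop:31}. Writing $B_\ell=R^D_{\nabla\ell,(\cdot)}$ and differentiating twice, I expect three groups of terms: one where both derivatives hit $R^D$, giving $(\Delta R^D)(\nabla\ell,\cdot)$; a cross term $2(\nabla_{e_j}R^D)(\nabla_{e_j}\nabla\ell,\cdot)$; and a term $R^D_{\Delta\nabla\ell,(\cdot)}$ where both derivatives hit $\nabla\ell$. For the third term I would substitute $\Delta\nabla\ell=-\nabla\ell$ from \eqref{eq:32}, producing $-B_\ell$. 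For the cross term I would use $\nabla_{e_j}\nabla\ell=-\ell\,e_j$ from \eqref{eq:31}, turning it into $-2\ell(\nabla_{e_j}R^D)(e_j,\cdot)$, which is a contraction of $\nabla R^D$ and vanishes because $R^D$ satisfies the Yang-Mills equation $(d_D)^*R^D=0$ (just as the analogous cross term vanished by the harmonic map equation). The first term requires the Bochner formula \eqref{eq:26} applied to $\varphi=R^D$: using $\Delta_{d_D}R^D=0$ on a Yang-Mills field, the identity gives $\Delta R^D = R^D\cdot(2\operatorname{Ric}^M\wedge\operatorname{id}-2R^M)+\mathcal{R}^D(R^D)$, and I would then contract this with $\nabla\ell$ and plug in the sphere data $\operatorname{Ric}^{\mathbb{S}^m}=(m-1)\operatorname{id}$ together with $R^M=\operatorname{id}\wedge\operatorname{id}$ (the curvature operator of the unit sphere).

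Assembling the pieces, the scalar curvature contribution from $2\operatorname{Ric}^M\wedge\operatorname{id}-2R^M$ evaluated on the unit sphere should produce a constant multiple of $B_\ell$, and combining it with the $-B_\ell$ from the third term should yield the clean coefficient $(m-4)$ alongside the surviving curvature term $\mathcal{R}^D(B_\ell)$. Since the Jacobi operator is $\mathcal{J}_D=(d_D)^*d_D+\mathcal{R}^D=-\Delta+B(\operatorname{Ric}^M(\cdot))+\mathcal{R}^D$ (via \eqref{eq:25}), rearranging $\Delta B_\ell$ into the form $-\mathcal{J}_D(B_\ell)+(\text{lower order})$ should deliver $\mathcal{J}_D(B_\ell)=-(m-4)B_\ell$. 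The main obstacle I anticipate is purely bookkeeping: getting the constant right in the contraction of the $2\operatorname{Ric}^M\wedge\operatorname{id}-2R^M$ term against $\nabla\ell$, since this involves the specific action of the $\wedge$-operator (defined in the paper) on $2$-forms valued in $\mathfrak{so}(E)$ and keeping careful track of the index $m$ versus the dimension-independent curvature contributions. The shift from $m-2$ to $m-4$ compared to the harmonic map case must emerge precisely from this Weitzenböck curvature term for $2$-forms rather than $1$-forms, so verifying that this accounting produces exactly $m-4$ (and explains why the Yang-Mills threshold is $m\ge5$ rather than $m\ge3$) is where I would concentrate the care.
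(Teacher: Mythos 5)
Your overall strategy coincides with the paper's: expand $\Delta(\iota_{\nabla\ell}R^D)$ into the three product-rule terms, kill the cross term with the Yang--Mills equation and $\nabla_{e_j}\nabla\ell=-\ell e_j$, get $-B_\ell$ from $\Delta\nabla\ell=-\nabla\ell$, feed the Weitzenb\"ock formula \eqref{eq:26} for the $d_D$-harmonic $2$-form $R^D$ into the remaining term using $2\operatorname{Ric}^M\wedge\operatorname{id}-2R^M=(2m-4)\operatorname{I}$ on $\mathbb{S}^m$, and invoke Lemma \ref{lemma:BL} so that $\mathcal{J}_D(B_\ell)$ can be rewritten via \eqref{eq:25}. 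All of that matches the paper's proof.

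There is, however, one genuine gap, and it sits exactly where you write that the curvature contribution survives ``alongside $\mathcal{R}^D(B_\ell)$''. The Weitzenb\"ock formula for the $2$-form $R^D$ produces the term $\mathcal{R}^D(R^D)_{\nabla\ell,(\cdot)}$, where $\mathcal{R}^D$ is the curvature operator acting on $\mathfrak{so}(E)$-valued $2$-forms; this is a different object from $\mathcal{R}^D(B_\ell)$, the curvature operator on $1$-forms applied to $B_\ell$, which enters \emph{twice} in the final assembly (once when converting $\Delta_{d_D}B_\ell$ into $-\Delta B_\ell$ via \eqref{eq:25}, and once from the $\mathcal{R}^D$ appearing in $\mathcal{J}_D$ itself). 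After all substitutions one is left with
$$
\mathcal{J}_D(B_\ell)=-(m-4)B_\ell+\Bigl(2\mathcal{R}^D(B_\ell)-\mathcal{R}^D(R^D)_{\nabla\ell,(\cdot)}\Bigr),
$$
and the proof closes only because the bracketed expression vanishes identically; this is the content of \eqref{eq:34}. Concretely, $\mathcal{R}^D(R^D)(\nabla\ell,X)=\sum_j\bigl([R^D_{e_j,\nabla\ell},R^D_{e_j,X}]-[R^D_{e_j,X},R^D_{e_j,\nabla\ell}]\bigr)=2\sum_j[R^D_{e_j,\nabla\ell},R^D_{e_j,X}]$, while $\mathcal{R}^D(B_\ell)(X)=\sum_j[R^D_{e_j,X},R^D_{\nabla\ell,e_j}]=\sum_j[R^D_{e_j,\nabla\ell},R^D_{e_j,X}]$, so the two contributions cancel. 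Without identifying and proving this Lie-bracket identity the curvature terms do not match up (there is an apparent factor-of-two discrepancy), so you need to add this computation; the rest of your outline, including the bookkeeping that produces $2m-4$ and hence the shift from $m-2$ to $m-4$, is sound.
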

\begin{proof} By calculations similar to that in the proof of Proposition \ref{prop:31} we have that
\begin{eqnarray*}
\Delta (\iota_{\nabla \ell} R^D)&=&  \Delta  (R^D_{\nabla \ell, (\cdot)})=\Delta (R_D)_{\nabla \ell, (\cdot)}+2\left(D_{e_j} R^D\right)_{\nabla_{e_j} \nabla \ell, (\cdot)}+R^D_{\Delta \nabla \ell, (\cdot)}\\
&=&\mathcal{R}^D (R^D)_{\nabla \ell, (\cdot)}+(2m-4)R^D_{\nabla \ell, (\cdot)}-R^D_{\nabla \ell, (\cdot)}\\
&=& \mathcal{R}^D (R^D)_{\nabla \ell, (\cdot)}+(2m-5)R^D_{\nabla \ell, (\cdot)}.
\end{eqnarray*}
From the line 1 to 2 we have used the Yang-Mills equation $(d_D)^* R^D=-\left(D_{e_j} R^D\right)_{e_j , (\cdot)}=0$ and $\nabla_{e_j} \nabla \ell=-\ell e_j$ to annihilate  $2\left(D_{e_j} R^D\right)_{\nabla_{e_j} \nabla \ell, (\cdot)}$. Here we also used (\ref{eq:31}), (\ref{eq:32}), Proposition \ref{prop:22}, precisely (\ref{eq:26}), and   that on $\mathbb{S}^m$, $2\Ric^M\wedge \operatorname{id}-2R^M=(2m-4)\operatorname{I}$, with $\operatorname{I}$ being the identity map  $\operatorname{I}:\mathfrak{so}(m)\to \mathfrak{so}(m)$. Summarizing the above we have that
\begin{equation}\label{eq:33}
\Delta B_\ell =\mathcal{R}^D (R^D)_{\nabla \ell,  (\cdot)}+(2m-5)B_\ell.
\end{equation}

Now apply Lemma \ref{lemma:BL} and Proposition \ref{prop:22} again, precisely (\ref{eq:25}),  and have that
\begin{eqnarray*}
\mathcal{J}_D (B_\ell)&=&\Delta_{d_D}(B_\ell)+\mathcal{R}^D(B_\ell)\\
&=&(m-1)B_\ell +2\mathcal{R}^D(B_\ell)-\mathcal{R}^D (R^D)_{\nabla \ell, (\cdot)}-(2m-5)B_\ell.
\end{eqnarray*}
In the second line above we also used (\ref{eq:33}). The claimed result follows if we can establish
\begin{equation}\label{eq:34}
2\mathcal{R}^D(B_\ell)-\mathcal{R}^D (R^D)_{\nabla \ell, (\cdot)}=0.
\end{equation}
Indeed for any $X$,
\begin{eqnarray*}
2\mathcal{R}^D(B_\ell)(X)-\mathcal{R}^D (R^D)_{\nabla \ell, (X)}&=& 2\sum_{j=1}^m [R^D_{e_j, X}, B_\ell(e_j)]\\
&\quad& -\sum_{j=1}^m \left( [R^D_{e_j, \nabla \ell}, R^D_{e_j, X}]-[R^D_{e_j, X}, R^D_{e_j, \nabla \ell}]\right)\\
&=&\sum_{j=1}^m [R^D_{e_j, X}, R^D_{\nabla \ell, e_j}]+[ R^D_{\nabla \ell, e_j}, R^D_{e_j, X}]=0.
\end{eqnarray*}
This completes the proof of the proposition.
\end{proof}

\begin{proposition}\label{prop:34} If for a linear function $\ell$, $B_\ell=\iota_{\nabla \ell} R^D\in \Omega^1(\mathfrak{so}(E))$ vanishes on $\mathbb{S}^m$, then $R^D$ is flat.
\end{proposition}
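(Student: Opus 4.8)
The plan is to run the dynamical argument of Proposition \ref{prop:32} one order up, replacing ``$u$ is constant along the flow lines of $\nabla\ell$'' by ``$R^D$ is covariantly invariant along the flow of $\nabla\ell$'', and then to use the contraction of the flow near its sink to force $R^D$ to vanish. Throughout I assume $\ell\not\equiv 0$, so that the gradient flow $\Psi_s$ of $\nabla\ell$ has a unique source $p_{-\infty}$ and a unique sink $p_\infty$, the minimum and maximum points of $\ell$ on $\mathbb{S}^m$, exactly as in the proof of Proposition \ref{prop:32}.

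First I would invoke the covariant Cartan formula $\mathcal{L}^D_X = d_D\,\iota_X + \iota_X\, d_D$ for the covariant Lie derivative acting on $\mathfrak{so}(E)$-valued forms, together with the second Bianchi identity $d_D R^D = 0$. Applying this to $X = \nabla\ell$ and $\omega = R^D$ gives
\[
\mathcal{L}^D_{\nabla\ell}\, R^D = d_D\big(\iota_{\nabla\ell} R^D\big) + \iota_{\nabla\ell}\big(d_D R^D\big) = d_D B_\ell = 0,
\]
using the hypothesis $B_\ell = 0$. The point of this identity is its geometric meaning: $\mathcal{L}^D_{\nabla\ell}$ is the infinitesimal generator of the pullback $\Psi_s^{D*}$ defined by dragging tangent vectors with $d\Psi_s$ and parallel-transporting the $\mathfrak{so}(E)$-values back along the flow line. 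Hence $\mathcal{L}^D_{\nabla\ell}R^D = 0$ says precisely that $\Psi_s^{D*}R^D = R^D$ for all $s$; that is, for every $x\ne p_{-\infty}$ and all tangent vectors $X, Y$ at $x$, the quantity $\tau_s^{-1}\big(R^D_{\Psi_s(x)}(d\Psi_s X, d\Psi_s Y)\big)$ is independent of $s$, where $\tau_s$ denotes parallel transport in $\mathfrak{so}(E)$ along the flow line.

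Next I would extract the vanishing from the dynamics at the sink. By (\ref{eq:31}) the Hessian of $\ell$ at $p_\infty$ equals $-\ell(p_\infty)\operatorname{id} = -(\max_{\mathbb{S}^m}\ell)\operatorname{id}$, which is negative definite, so $p_\infty$ is a nondegenerate hyperbolic sink and, for each fixed $x\ne p_{-\infty}$, the differential contracts: $\|d\Psi_s|_x\|\to 0$ as $s\to\infty$ by standard linearization estimates. Since $R^D$ is a smooth, hence bounded, tensor on the compact sphere and $\tau_s$ is a linear isometry,
\[
\big\|\tau_s^{-1}\big(R^D_{\Psi_s(x)}(d\Psi_s X, d\Psi_s Y)\big)\big\| \le \|R^D\|_\infty\, \|d\Psi_s X\|\,\|d\Psi_s Y\| \longrightarrow 0.
\]
As this norm is constant in $s$, it must be zero, so $R^D_x(X, Y) = 0$ for every $x\ne p_{-\infty}$. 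By continuity $R^D$ vanishes on all of $\mathbb{S}^m$, i.e.\ $D$ is flat.

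The main obstacle is the bookkeeping in the middle step: one must verify cleanly that the covariant Cartan formula holds for $\mathfrak{so}(E)$-valued forms and that $\mathcal{L}^D_{\nabla\ell}$ is genuinely the generator of $\Psi_s^{D*}$, so that $\mathcal{L}^D_{\nabla\ell}R^D = 0$ really does yield the pointwise $s$-independence above; the decay $\|d\Psi_s|_x\|\to 0$ then follows from standard ODE theory for a nondegenerate sink. An alternative that avoids abstractly invoking $\Psi_s^{D*}$ is to differentiate $\tau_s^{-1}\big(R^D_{\Psi_s(x)}(d\Psi_s X, d\Psi_s Y)\big)$ directly in $s$, checking that its $s$-derivative is $(\mathcal{L}^D_{\nabla\ell}R^D)$ evaluated along the orbit, which reduces the whole argument to the covariant Cartan identity and the Bianchi identity already recorded above.
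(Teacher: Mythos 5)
Your proof is correct, and it reaches the same destination by a noticeably different mechanism. The paper works at the level of the connection: it lifts the gradient flow $\Psi_s$ to the principal bundle, sets $D_s=\widetilde{\Psi}_{-s}D$, and quotes the variational formula $\iota_{\nabla\ell}R^D=\frac{d}{ds}\big|_{s=0}D_s$ (formula (2.34) of Bourguignon--Lawson) to conclude that $B_\ell=0$ forces $D_s$ to be constant, hence $D$ is (gauge-equivalent to) the pullback of the fibre over a single point and is therefore flat. You instead work one level up, at the level of the curvature: the covariant Cartan formula plus the second Bianchi identity give $\mathcal{L}^D_{\nabla\ell}R^D=d_DB_\ell+\iota_{\nabla\ell}d_DR^D=0$, so $R^D$ is invariant under the covariant flow pullback, and you then kill it using the exponential contraction $\|d\Psi_s|_x\|\to 0$ at the nondegenerate sink $p_\infty$ together with the boundedness of $R^D$ and the fact that parallel transport is an isometry. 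Both arguments are ``flow-invariance plus collapse to a point,'' and neither uses the Yang--Mills equation; the trade-off is that your route is self-contained (no principal-bundle lift, no external citation, and the final collapse step is made quantitative rather than asserted), while the paper's route is shorter granted the cited formula and yields the slightly stronger structural conclusion that the connection itself is trivial over $\mathbb{S}^m\setminus\{p_{-\infty}\}$, not merely that its curvature vanishes. The only point you should nail down, as you yourself flag, is that $\mathcal{L}^D_{\nabla\ell}=d_D\iota_{\nabla\ell}+\iota_{\nabla\ell}d_D$ really is the infinitesimal generator of the covariant pullback $\Psi_s^{D*}$ on $\mathfrak{so}(E)$-valued forms; this is a short direct computation (for a $2$-form the commutator terms cancel by antisymmetry), and with it the argument is complete.
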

\begin{proof} This was proved in Proposition 4.3 of \cite{NU}. Here we  provide an alternate simple argument.
Let $\Psi_s(p)$ be the flow generated by $\nabla \ell$. Let $\widetilde{\Psi}_s$ be its lift on the related principle bundle. Let $D_s=\widetilde{\Psi}_{-s} D$ be a family of connections. It is known (cf. (2.34) of \cite{BL}) that
$$
B_\ell=\iota_{\nabla \ell} R^D=\left.\frac{d}{ds}\right|_{s=0} D_s.
$$
Here the definition of the lifting $\widetilde{\Psi}_s$ depends on $D$ via a horizontal lifting of $\nabla \ell$ requiring that $\Psi_s$ is identity map when $s=0$.  
Hence if for some $\ell$ $B_\ell=0$, it  implies that $D_s$ is  constant. This shows that $D$ is the connection of the  pull back from the fiber over one point, namely $p_{-\infty}$. This implies that $R^D$ is flat. One can also see this by defining the  $D$ and $D_s$ via the connexion (namely mappings, which smoothly depends on the paths satisfying some additional axioms, between the fibers over the two ends of smooth pathes) as on page 445 of \cite{Ni-holo}. The argument does not assume that  $R^D$ is a Yang-Mills fields.
\end{proof}

The part of Theorem \ref{thm:main} concerning the Yang-Mills fields now follows exactly as the previous case for harmonic maps.

\section{Minimal submanifolds in $\mathbb{S}^n$}

The argument of the previous section also implies a similar result for minimal submanifolds in $\mathbb{S}^n$. Let $N(M)$ denote the normal bundle of $M$. Here the key is the Codazzi equation, namely for $X, Y, Z $ tangent to $M$
\begin{equation}\label{eq:41}
(\nabla_X B)(Y, Z)-(\nabla_Y B)(X, Z)=0, \mbox{ where } B(X, Y)=\bar{\nabla}_XY-\nabla_X Y
\end{equation}
where $\bar{\nabla}$ being the Levi-Civita connection of $\mathbb{S}^n$ and $\nabla_XY$ being the induced connection of $M$ via the isometric immersion. Recall that $(\nabla_X B)(Y, Z)=\nabla^{\perp} (B(X, Y))-B(\nabla_XY, Z)-B(Y, \nabla_X Z)$ and $\nabla^{\perp}_X V=(\bar{\nabla}_X V)^\perp$ for any section $V$ of $N(M)$. Here $(\cdot)^{\perp}$ stands for the projection to the normal bundle.  To put into the setting of our previous discussion we define $\beta: T_pM \to T_pM \otimes N_p(M)$ as
$$
\langle \beta(X), Y\otimes V \rangle :=\langle B(X, Y), V\rangle:=\langle A^V(X), Y\rangle.
$$
The last equation defines $A^V: T_pM \to T_pM$, a the symmetric tensor of $T_pM$ (given any $p\in M$) for any $V\in N_p(M)$. The connection $D$ on $TM \otimes N(M)$ is defined via $\nabla$ and $\nabla^{\perp}$.
The $1$-form $\beta$ defined as above is a  $1$-form valued in $TM \otimes N(M)$. Direct calculation shows that the Codazzi equation is equivalent to $d_D \beta =0$.

The trace of the  second fundamental form $B: T_pM\times T_pM \to N_p(M)$, namely   $H:=\sum_{j=1}^mB(e_j, e_j)=0$,  for an orthonormal basis $\{e_i\}$ of $T_pM$, is called the mean curvature.  $M$ is a   minimal submanifold if and only if $H=0$. Below we show that $H=0$ implies that $\beta$ is a $d_\nabla$-harmonic $1$-form. In fact for an orthonormal basis $\{e_i\}$ of $T_pM$ with the property $\nabla_{e_i} e_j=0$ at a given $p$,
\begin{eqnarray}
\langle -(d_D)^*\beta, X\otimes V\rangle &=&\sum_{i=1}^m\langle(\nabla_{e_i} B)(e_i, X), V\rangle\nonumber\\
&=&\sum_{i=1}^m\nabla_{e_i}\langle \beta(e_i), X\otimes V\rangle-\langle \beta(\nabla_{e_i} e_i), X\otimes V\rangle \nonumber\\
&=& \sum_{i=1}^m\nabla_{e_i}\langle B(e_i, X), V\rangle-\langle B(\nabla_{e_i} e_i, X),  V\rangle \nonumber\\
&=&\sum_{i=1}^m\langle  (\nabla_{X} B)(e_i, e_i), V\rangle=0.\label{eq:42}
\end{eqnarray}
In the above we also used the Codazzi equation.
Summarizing the discussion we have that the  vanishing of the  mean curvature implies that $\beta$ satisfies  $(d_D)^*\beta =0$. Note that $d_D$ involves the induced connection $\nabla^{\perp}$ (which defined as $(\bar{\nabla}_X V)^{\perp}$) on $N(M)$. In fact the argument above also proves the following proposition.

\begin{proposition}\label{prop:41}
Let $M$ be an isometric immersed submanifold in $\mathbb{S}^n$ (or in any space forms with constant sectional curvature). Then $M$ has parallel mean curvature $H$ if and only if $1$-form  $\beta$ defined above is  $d_D$-harmonic.
\end{proposition}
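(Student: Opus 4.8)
The plan is to observe that the whole statement reduces to the single computation already carried out in (\ref{eq:42}), performed without the simplifying assumption $H=0$. First I would recall that the Codazzi equation (\ref{eq:41}) is equivalent to $d_D\beta=0$, and that this holds for \emph{any} isometric immersion into a space form: in constant ambient curvature the tensor $\bar R(X,Y)Z$ is tangential whenever $X,Y,Z$ are tangent to $M$, so its normal component---which is precisely the obstruction to Codazzi---vanishes identically. Consequently $\beta$ is always $d_D$-closed, and (in the Hodge-theoretic sense of closed and coclosed, as used earlier for the minimal case) $\beta$ is $d_D$-harmonic if and only if $(d_D)^*\beta=0$. Thus the proposition is equivalent to the assertion that $(d_D)^*\beta=0$ holds exactly when $\nabla^\perp H=0$.

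To establish this I would repeat the chain of equalities in (\ref{eq:42}) verbatim through its third line, keeping $B$ general. Choosing an orthonormal frame $\{e_i\}$ with $\nabla_{e_i}e_j=0$ at the point in question and pairing against $X\otimes V$, the definition of $(d_D)^*$ and of $\beta$ gives
\begin{equation*}
\langle -(d_D)^*\beta, X\otimes V\rangle=\sum_{i=1}^m\langle (\nabla_{e_i}B)(e_i,X), V\rangle .
\end{equation*}
The symmetry of $B$ together with the Codazzi equation (\ref{eq:41}) then yields $(\nabla_{e_i}B)(e_i,X)=(\nabla_X B)(e_i,e_i)$, so the right-hand side becomes $\sum_i\langle(\nabla_X B)(e_i,e_i),V\rangle$. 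Since the metric trace commutes with covariant differentiation, $\sum_i(\nabla_X B)(e_i,e_i)=\operatorname{tr}_g(\nabla_X B)=\nabla^\perp_X\big(\operatorname{tr}_g B\big)=\nabla^\perp_X H$, whence I arrive at the clean identity
\begin{equation*}
\langle -(d_D)^*\beta, X\otimes V\rangle=\langle \nabla^\perp_X H, V\rangle,\qquad \text{for all } X\in T_pM,\ V\in N_p(M).
\end{equation*}

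The conclusion is then immediate: the left-hand side vanishes for all $X$ and $V$ exactly when $(d_D)^*\beta=0$, while the right-hand side vanishes for all $X$ and $V$ exactly when $\nabla^\perp_X H=0$ for every $X$, i.e. when $H$ is parallel. Combining this with $d_D\beta=0$ shows that $\beta$ is $d_D$-harmonic if and only if $H$ is parallel, which is the proposition. I do not expect a genuine obstacle: the only points demanding care are the correct identification of $(d_D)^*\beta$ as a section of $TM\otimes N(M)$ through the above pairing, and the verification that every ingredient---the $(d_D)^*$ formula, the symmetry of $B$, and Codazzi---remains valid in an arbitrary space form, which it does since only the constancy of the ambient sectional curvature was invoked.
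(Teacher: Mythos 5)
Your proposal is correct and is essentially the paper's own argument: the paper proves the proposition simply by noting that the computation in (\ref{eq:42}), carried out without imposing $H=0$, produces $\langle -(d_D)^*\beta, X\otimes V\rangle=\langle\nabla^{\perp}_X H, V\rangle$, while $d_D\beta=0$ is automatic from Codazzi in a space form. Your write-up just makes these two steps explicit.
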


This gives a characterization of submanifolds with parallel mean curvature, similar to that of Ruh-Vilms \cite{R-V}, which is formulated in terms of the harmonicity of  the Gauss map into the corresponding Grassmanian manifolds. The proposition above seems  easier to use.
The second variation of the area $\mathcal{A}$ for a minimal submanifold $M$ (inside another Riemannian manifold $N$) has the following form (\cite{Simons-Ann}, Theorem 3.2.2):
\begin{equation}\label{eq:43}
\left.\frac{d^2}{ds^2}\mathcal{A}(M_s)\right|_{s=0}=\int_M \langle \nabla^*\nabla V-\bar{R}(V)-\tilde{A}(V), V\rangle=:\int_M \langle \mathcal{J}_M (V), V\rangle,
\end{equation}
where $V$ is the variational vector field, which in this case belongs to $\Gamma(N(M))$. Here $\nabla^*\nabla =-\Delta$ of $N(M)$ (namely with respect to $\nabla^{\perp}$).
 For two  sections of the normal bundle $V, W$, the following defines the operators $\bar{R}$ and $\tilde{A}$, which are symmetric transformations of $N_p(M)$:
$$
\bar{R}(V)=\sum_{j=1}^m (R^{N}_{V, e_j}e_j)^{\perp}, \quad \quad \langle \tilde{A}(V), W\rangle =\sum_{j=1}^m \langle (\bar{\nabla}_{e_j} V)^{T}, (\bar{\nabla}_{e_j} W)^{T}\rangle.
$$
Here $(\cdot)^T$ stands the projection to $T_pM$ at any given point $p\in M$. In the case that $N=\mathbb{S}^{n}$, we have that  $\bar{R}(V)=mV$. Motivated by Proposition \ref{prop:41} and the results of last section we have the following result for minimal submanifolds.

\begin{theorem}\label{thm:main2} Let $M^m$ be a minimal submanifold of $\mathbb{S}^n$. Let $N(M)$ denote the normal bundle of $M$ inside $\mathbb{S}^n$. For the associated Jacobi operator $\mathcal{J}_M$, let $E^M_\lambda=\{ V\in \Gamma(N)\, |\, \mathcal{J}_M (V)=\lambda V\}$ be the eigenspace.
Then $\dim(E^M_{-m})\ge n-m$. The equality holds if and only if $M$ is isometric to $\mathbb{S}^m$. In particular $\lambda_1(\mathcal{J}_M)\le -m$.
\end{theorem}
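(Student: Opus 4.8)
The plan is to follow the template of Section 3 essentially verbatim: exhibit an $(n-m)$-dimensional family of eigensections of $\mathcal{J}_M$ with eigenvalue $-m$, all built from linear functions on the ambient space, and then control its dimension. Fix $a\in\mathbb{R}^{n+1}$, corresponding to the linear function $\ell(x)=\langle a,x\rangle$, and at $x\in M\subset\mathbb{S}^n\subset\mathbb{R}^{n+1}$ decompose $a=a^\top+V_a+\langle a,x\rangle\,x$ along $T_xM\oplus N_x(M)\oplus\mathbb{R}x$. The normal component $V_a:=a^\perp$ is the exact analogue of $X_\ell=du(\nabla\ell)$ and $B_\ell=\iota_{\nabla\ell}R^D$: indeed $V_a=(\nabla\ell)^\perp$, the projection onto $N(M)$ of the $\mathbb{S}^n$-gradient of $\ell$. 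First I would extract the structure equations by differentiating the identity $D_Xa=0$ (where $D$ is the flat connection of $\mathbb{R}^{n+1}$) and splitting into the three summands via the Gauss and Weingarten formulas for $M\subset\mathbb{S}^n\subset\mathbb{R}^{n+1}$; the $\mathbb{R}x$-parts cancel automatically and the tangential and normal parts give
\[
\nabla^\perp_X V_a=-B(X,a^\top),\qquad \nabla_X a^\top=A^{V_a}(X)-\langle a,x\rangle\,X .
\]

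The heart of the argument, and the step I expect to be the main obstacle, is the resulting eigenvalue computation, the analogue of Propositions \ref{prop:31} and \ref{prop:33}. Differentiating $\nabla^\perp_X V_a=-B(X,a^\top)$ once more and tracing in a normal frame produces three terms: one in $\nabla B$, which vanishes by the traced Codazzi identity $\sum_i(\nabla_{e_i}B)(e_i,a^\top)=0$ together with $H=0$ (exactly the content of (\ref{eq:42})); a term $\langle a,x\rangle H=0$ coming from the second structure relation; and the Weingarten term $\sum_i B(e_i,A^{V_a}(e_i))$, which equals $\tilde A(V_a)$ by the definition of $\tilde A$. Hence $\nabla^*\nabla V_a=\tilde A(V_a)$, and since $\bar R(V_a)=m\,V_a$ on $\mathbb{S}^n$,
\[
\mathcal{J}_M(V_a)=\nabla^*\nabla V_a-\bar R(V_a)-\tilde A(V_a)=\tilde A(V_a)-m\,V_a-\tilde A(V_a)=-m\,V_a,
\]
so $V_a\in E^M_{-m}$ for every $a$. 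The delicacy is purely in this bookkeeping: the $\tilde A(V_a)$ generated by the rough Laplacian must cancel precisely the $\tilde A$ appearing in $\mathcal{J}_M$.

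Finally I would bound dimensions. The map $\Phi:\mathbb{R}^{n+1}\to E^M_{-m}$, $a\mapsto V_a$, is linear, and $\ker\Phi=\{a:a^\perp\equiv0\}\subset T_xM\oplus\mathbb{R}x$ for each fixed $x$, a subspace of dimension $m+1$; hence $\dim\ker\Phi\le m+1$ and $\dim E^M_{-m}\ge\dim(\operatorname{im}\Phi)\ge(n+1)-(m+1)=n-m$. For the equality case, $\dim E^M_{-m}=n-m$ squeezes $\dim(\operatorname{im}\Phi)=n-m$ and forces $\dim\ker\Phi=m+1$, so $\ker\Phi$ must coincide with the fixed $(m+1)$-plane $T_xM\oplus\mathbb{R}x$ for every $x$; all these planes are then a single subspace $W$, giving $M\subset W\cap\mathbb{S}^n$, a great $m$-sphere, so $M$ is totally geodesic and isometric to $\mathbb{S}^m$. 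Conversely, a minimal $M$ isometric to $\mathbb{S}^m$ is totally geodesic by the Gauss equation, whence $B\equiv0$, the normal bundle is the trivial flat bundle $\mathbb{S}^m\times W^\perp$, and $\mathcal{J}_M=\nabla^*\nabla-m\,\operatorname{id}$, whose $(-m)$-eigensections are exactly the parallel (constant) ones, of dimension $n-m$; this closes the equivalence. The last assertion $\lambda_1(\mathcal{J}_M)\le-m$ is then immediate from $E^M_{-m}\ne\{0\}$.

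I expect no serious difficulty in the dimension count or the equality analysis, which are elementary linear algebra plus the Gauss equation; the genuine work is the trace computation of the rough Laplacian, where the interplay of Codazzi, minimality, and the Weingarten identity $\sum_i B(e_i,A^{V_a}(e_i))=\tilde A(V_a)$ must be handled with the correct signs so as to reproduce the clean cancellation leaving $-m\,V_a$.
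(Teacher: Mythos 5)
Your proposal is correct and follows essentially the same route as the paper: the eigensections are the normal projections $V_\ell=(\bar{\nabla}\ell)^{\perp}$, the identity $\mathcal{J}_M(V_\ell)=-mV_\ell$ comes from the traced Codazzi equation, minimality, and the Weingarten cancellation $\nabla^*\nabla V_\ell=\tilde{A}(V_\ell)$, and the dimension count is the same kernel argument. Your equality analysis (forcing $\ker\Phi=T_xM\oplus\mathbb{R}x$ to be a fixed $(m+1)$-plane, hence $M$ lies in a great $m$-sphere) is a slight repackaging of the paper's step, which instead differentiates $V_\ell\equiv 0$ to conclude $B\equiv 0$; both are valid.
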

\begin{proof} Let $V_\ell$ be the projection of $\bar{\nabla} \ell$ to the normal bundle $N(M)$. It is a section of $N(M)$. Denote the tangential projection of $\bar{\nabla} \ell$ to $TM$ by $T_\ell$. We calculate $\mathcal{J}_M(V_\ell)$.  Pick an orthogonal frame $\{e_j\}_{j=1}^m$ with $\nabla_{e_i} e_j =0$ at a fixed point $p\in M$. Then for a section  $W$ of $N(M)$,
\begin{eqnarray*}
\langle \mathcal{J}_M(V_\ell), W\rangle &=& -\langle \sum_{j=1}^m \bar{\nabla}_{e_j} \left(\bar{\nabla}_{e_j} V_\ell\right)^{\perp}, W\rangle-m\langle V_\ell, W\rangle- \sum_{j=1}^m \langle(\bar{\nabla}_{e_j} V_\ell)^{T}, (\bar{\nabla}_{e_j} W)^{T}\rangle;
\end{eqnarray*}
\begin{eqnarray*}
-\langle \sum_{j=1}^m \bar{\nabla}_{e_j} \left(\bar{\nabla}_{e_j} V_\ell\right)^{\perp}, W\rangle &=& -\langle \sum_{j=1}^m \bar{\nabla}_{e_j} \left(\bar{\nabla}_{e_j}(\bar{\nabla} \ell-T_\ell)\right)^{\perp}, W\rangle\\
&=& \langle  \sum_{j=1}^m \bar{\nabla}_{e_j} \left(B(e_j, T_\ell)\right), W\rangle \\
&=& \langle  \sum_{j=1}^m B(e_j, \nabla_{e_j} T_\ell), W\rangle.
\end{eqnarray*}
From line 2 to 3 we have used (\ref{eq:31}) and from line 3 to 4 we have used that the Codazzi equation and $\nabla^{\perp}_X ( \sum_{j=1}^m B(e_j, e_j))=0$. Finally
\begin{eqnarray*}
\langle  \sum_{j=1}^m B(e_j, \nabla_{e_j} T_\ell), W\rangle &=&  \sum_{j=1}^m \langle A^W(e_j), \nabla_{e_j} T_\ell\rangle= \sum_{j=1}^m \langle A^W(e_j), \bar{\nabla}_{e_j} T_\ell\rangle \\
&=& \sum_{j=1}^m \langle A^W(e_j), \bar{\nabla}_{e_j} (\bar{\nabla}\ell-V_\ell)\rangle\\
&=& -\langle W, \ell \sum_{j=1}^mB(e_j, e_j)\rangle  +\sum_{j=1}^m \langle (\bar{\nabla}_{e_j} W)^{T}, (\bar{\nabla}_{e_j} V_\ell)^{T}\rangle\\
&=&\sum_{j=1}^m \langle (\bar{\nabla}_{e_j} W)^{T}, (\bar{\nabla}_{e_j} V_\ell)^{T}\rangle.
\end{eqnarray*}
Here we have used that $ A^W(e_j)=-(\bar{\nabla}_{e_j} W)^{T}$ and $\sum_{j=1}^mB(e_j, e_j)=0$. Putting the above together we have that
$$
\langle \mathcal{J}_M(V_\ell), W\rangle =-m\langle V_\ell, W\rangle
$$
for any local section $W$ of $N(M)$. This proves that $V_\ell\in E^M_{-m}$. Now let $\mathcal{S}=\{V_\ell\, |\, \ell \in \mathcal{H}^1\}$. Since for any $p\in M\subset \mathbb{S}^n$, $\dim(\{\nabla \ell (p)\,|\, \ell \in \mathcal{H}^1\})=n$. Hence  $\dim(\mathcal{S}_p)=n-m$ with $\mathcal{S}_p=\{V_\ell(p)|\, V_\ell\in \mathcal{S}\}$. This proves the lower multiplicity estimate since $\dim(\mathcal{S})\ge \dim(\mathcal{S}_p)$.

It is easy to see that the standard embedding of $\mathbb{S}^m\to \mathbb{S}^n$ attains  the equality of the theorem. Let $\mathcal{G}:=\{\nabla \ell\,|\, \ell \in \mathcal{H}^1\}$. As before $\dim(\mathcal{G})=n+1.$ Let $r_p: \mathcal{G}\to T_p \mathbb{S}^n$ be the restriction map. Let $\mathcal{N}$ and $\mathcal{T}$ be the projections from $\mathcal{G}$ to $N(M)$ and $T(M)$. Then $\mathcal{S}=\mathcal{N}(\mathcal{G})$. If the equality holds we have that $\dim(\mathcal{S})=n-m$. This implies that
$\dim(\ker(\mathcal{N}))=n+1-(n-m)$.  Clearly $\ker{\mathcal{N}}\subset \ker (\mathcal{N}_p\cdot r_p).$ Since $\dim(\ker (\mathcal{N}_p\cdot r_p))=n+1-(n-m)$, we have that $\ker{\mathcal{N}}= \ker (\mathcal{N}_p\cdot r_p)$ for any $p\in M$. Namely if $(\bar{\nabla} \ell)^{\perp}(p)=0$, $(\bar{\nabla} \ell)^{\perp}\equiv 0$ on $M$. This implies that
$$
0=\nabla_X V_\ell(=\nabla^{\perp}_X V_\ell)=(\bar{\nabla}_X (\bar{\nabla} \ell-T_\ell))^{\perp}=\left((-\ell X)-B(X, T_\ell)\right)^{\perp}=-B(X, (\bar{\nabla}\ell)^T).
$$
Since such $(\bar{\nabla}\ell)^T$ spans $T_pM$ as $\ell$ varies, the above equation implies that $M$ is totally geodesic.
\end{proof}

We remark that the Morse index estimate of minimal submanifolds in $\mathbb{S}^n$ was proved in \cite{Simons-Ann}, and the above  proof for the equality case is similar to the corresponding rigidity for the Morse index of \cite{Simons-Ann}. Since that  the equality holds for the multiplicity lower bound estimate does not necessarily imply that the Morse index (which can be strictly larger than the multiplicity) lower estimate holds the equality, the result above does not follow from Simons' result. When $M$ is a hypersurface, which is not a totally geodesic sphere, an eigenvalue  estimate   for $\mathcal{J}_M$  was obtained in  \cite{Simons-Ann} (cf. Lemma 6.1.7 there). It played a crucial role in Simons' proof of Bernstein conjecture by applying it to the analysis on the cone over $M$. The above result provides a high codimensional analogue.  It is expected that the eigenvalue estimate of Theorem \ref{thm:main2} plays some role in the study of high codimensional minimal surfaces. The Morse index of fully immersed (non-stable) $\mathbb{S}^2$ in $\mathbb{S}^{2n}$ was calculated in \cite{Ejiri} to be $2(n(n+2)-3)$.

\section*{Acknowledgments} {} We would like to thank Jiaping Wang for his interest, Zhuhuan Yu for the reference \cite{NU}.

\end{document}